\DeclareMathOperator{\charr}{char}
\begin{document}
\title{Relative Noether inequality on fibered surfaces}
\author{Xinyi Yuan
\thanks{Department of Mathematics, University of California, Berkeley, CA 94720, U.S.A.
Email: yxy@math.berkeley.edu}
\and
Tong Zhang
\thanks{Department of Mathematics, University of Alberta, Edmonton, AB T6G 2G1, Canada
Email: tzhang5@ualberta.ca}
}
\date{\today}


\maketitle

\theoremstyle{plain}
\newtheorem{theorem}{Theorem}[section]
\newtheorem{lemma}[theorem]{Lemma}
\newtheorem{coro}[theorem]{Corollary}
\newtheorem{prop}[theorem]{Proposition}
\newtheorem{defi}[theorem]{Definition}

\theoremstyle{remark}\newtheorem{remark}[theorem]{Remark}

\tableofcontents

\section{Introduction}

This paper is the algebraic version of the previous work \cite{YZ} of the authors on linear series on arithmetic surfaces.

We prove effective upper bounds on the global sections of nef line bundles of small generic degree over a fibered surface over a field of any characteristic.
It can be viewed as a relative version of the classical Noether inequality for surfaces.

As a consequence, we give a new proof of the slope inequality for fibered surface without using any
stability method. The treatment is essentially different from these of Xiao, Cornalba--Harris and Moriwaki. 
We also study the geography problem of surfaces in positive characteristics and show that the Severi
inequality is true for surfaces of general type in positive characteristic whose Albanese map is generically finite.
Moreover, the geography of surfaces with Albanese fibrations is studied.

We would like to point out that most results in this paper, except the slope inequality, are new in positive characteristic. Nevertheless, we will state our
results in full generality, since in characteristic $0$ they still hold and Theorem \ref{relnoether} and \ref{relnoetheromega} have not been stated yet in previous literatures.

\subsection{Relative Noether inequality}

Let $k$ be an algebraically closed field of any characteristic.
Let $f: X \to Y$ be a \textit{surface fibration of genus $g$} over $k$. That is:
\begin{itemize}
\item [(1)] $X$ is a smooth projective surface over $k$;
\item [(2)] $Y$ is a smooth projective curve over $k$;
\item [(3)] $f$ is flat and the general fiber $F$ of $f$ is a geometrically integral curve of arithmetic genus $g:=p_a(F)$.
\end{itemize}

Note that here we do \textit{not} assume the general fiber $F$ to be smooth, since it is not always
true if $\charr k>0$. See \cite{Li1}.

A reduced and irreducible curve $C$ over $k$ is called \textit{hyperelliptic} if its arithmetic genus
$p_a(C) \ge 2$ and if there exists a flat morphism of degree $2$ from $C$ onto $\mathbb P^1_k$.
It follows that $C$ is automatically Gorenstein (cf. \cite{Li2}). If $F$ is hyperelliptic,
then $f$ is called a \textit{hyperelliptic fibration}. Otherwise, $f$ is called
a \textit{non-hyperelliptic fibration}.

The following is the main theorem of this paper.

\begin{theorem} \label{relnoether}
Let $f: X \to Y$ be a  surface fibration of genus $g \ge 2$ over $k$,
and $L$ be a nef line bundle on $X$. Denote $d=\deg(L|_F)$, where $F$ is a general fiber
of $f$. If $2 \le d \le 2g-2$, then
$$
h^0(L) \le (\frac 14 + \frac {2+\varepsilon}{4d}) L^2 + \frac {d+2-\varepsilon}{2}.
$$
Here $\varepsilon=1$ if $F$ is hyperelliptic, $d$ is odd and $L|_F \le K_F$. Otherwise, $\varepsilon=0$.
\end{theorem}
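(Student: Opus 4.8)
The plan is to bound $h^0(L)$ by filtering global sections according to their order of vanishing along a general fiber, estimating each graded piece by Clifford's theorem on that fiber, and controlling the total by a self-intersection inequality coming from the nefness of $L$; no Harder--Narasimhan or stability input is needed. I first record the constant term. Taking $F$ to be a general fiber, it is an integral Gorenstein curve of arithmetic genus $g$, and $r := h^0(L|_F)$ is the rank of $f_*L$. Since $0 \le d \le 2g-2$, either $L|_F$ is special and Clifford's theorem for Gorenstein curves gives $r \le d/2+1$, or $L|_F$ is non-special and Riemann--Roch gives $h^0(L|_F) = d-g+1 \le d/2+1$; in both cases $r \le (d+2)/2$. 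When $F$ is hyperelliptic, $d$ is odd and $L|_F \le K_F$, the equality analysis in Clifford's theorem (the extremal special bundles on a hyperelliptic curve being multiples of the degree-two pencil, hence of even degree) improves this to $r \le (d+1)/2$. This is exactly $r \le (d+2-\varepsilon)/2$, which will supply the additive constant.

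Next I set $V_j = H^0(X, L-jF)$ for $j \ge 0$, so that $V_0 = H^0(L)$, $V_j \supseteq V_{j+1}$, and $V_j = 0$ for $j \gg 0$; write $N$ for the largest $j$ with $V_j \ne 0$. Since $F^2 = 0$ the restriction identifies $(L-jF)|_F$ with $L|_F$, so each quotient $V_j/V_{j+1}$ injects into $H^0(F,L|_F)$ and $r_j := \dim(V_j/V_{j+1}) \le r$. To sharpen this, let $M_j$ be the moving part of $|L-jF|$, a nef divisor with $M_j \cdot F = d_j \le d$; the image of $V_j$ in $H^0(F,L|_F)$ is a linear system on $F$ of degree $d_j$ and projective dimension $r_j - 1$, so Clifford on $F$ yields $r_j \le d_j/2 + 1$. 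Summing over $j$ gives
\[
h^0(L) = \sum_{j=0}^{N} r_j \le \tfrac12 \sum_{j=0}^{N} d_j + (N+1).
\]

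The crux is then to convert $\sum_j d_j$ and $N$ into $L^2$ with the correct constant. Writing $L \sim M_j + jF + Z_j$ with $Z_j$ the effective fixed part and using that the nef divisors $M_0 \ge M_1 \ge \cdots \ge M_N$ form a descending chain, I would expand $L^2$ by intersecting $L$ against this chain. Because $F^2 = 0$, the relevant contributions in $(M_j + jF + Z_j)^2$ are the cross terms $2(M_j \cdot F)$, so each level contributes a doubled factor $2d_j$ rather than $d_j$; this doubling is precisely what turns the naive Noether coefficient $1/2$ into $1/4$, and it is the step where the nefness of $L$ and the control of the moving parts are essential. Carrying out this bookkeeping yields $L^2 \ge 2\sum_j d_j - O(d)$, and substituting into the displayed estimate together with $r \le (d+2-\varepsilon)/2$ produces $h^0(L) \le (\tfrac14 + \tfrac{2+\varepsilon}{4d})L^2 + \tfrac{d+2-\varepsilon}{2}$.

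I expect the main obstacle to be exactly this self-intersection estimate: making the chain $M_0 \ge \cdots \ge M_N$ well defined and nef, tracking the base loci and the fibers at which $d_j$ drops, and extracting the boundary contributions so that the additive constant comes out as $(d+2-\varepsilon)/2$ and not larger. In positive characteristic the general fiber is only Gorenstein, so both Clifford's inequality and its hyperelliptic equality analysis must be invoked in that generality, which is where the hypothesis that $F$ is integral and Gorenstein is used. Sharpness, and a guide to the correct constants, is provided by $X = F \times \mathbb{P}^1$ with $F$ hyperelliptic and $L$ pulled back from a multiple of the $g^1_2$, where every inequality above becomes an equality.
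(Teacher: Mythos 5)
Your architecture is genuinely close to the paper's: the authors also build a filtration by subtracting fibers, take movable parts, bound each graded piece by Clifford's theorem on the (integral Gorenstein) general fiber, and convert the degree data into $L^2$ by intersection-theoretic bookkeeping. But the step you flag as "the main obstacle" is not just an obstacle — as assembled, your ingredients cannot produce the stated constants. Run your own estimate to the end: $h^0(L)\le \tfrac12\sum_j d_j+(N+1)$, together with $L^2\ge 2\sum_j d_j-O(d)$ and the only available a priori bound on the length of the filtration, namely $N\le L^2/d$ (from $L$ nef and $L-NF$ effective), gives
$$
h^0(L)\ \le\ \Bigl(\frac14+\frac1d\Bigr)L^2+\frac d2+O(1),
$$
with secondary coefficient $\tfrac1d=\tfrac{4}{4d}$, which is strictly weaker than the claimed $\tfrac{2+\varepsilon}{4d}$ in every case. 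The factor you are missing has to come from somewhere, and in the paper it comes from two fiber-level refinements that your sketch does not use: (a) for non-hyperelliptic $F$, the strong Clifford bound $r_i\le \tfrac12 d_i+\tfrac12$ away from $\mathcal O_F$ and $\omega_F$, saving $\tfrac12$ per step of the filtration; (b) for hyperelliptic $F$, a parity lemma (a base-point-free special line bundle on a hyperelliptic Gorenstein curve has even degree), which forces the generic degrees to drop by at least $2$ at each stage. To exploit (b) you must also keep the finer intersection inequality $L^2\ge 2a_0d_0+\sum_i a_i(d_{i-1}+d_i)-2d_0$, which retains the consecutive differences $d_{i-1}-d_i$; your collapsed form $L^2\ge 2\sum_j d_j-O(d)$ throws that information away. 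Finally, the paper needs a separate bound on the total number of steps, $2a_0+\sum_{i\ge1}a_i-2\le L^2/d$, proved by intersecting the decomposition $L'_0=L'_n+bF+Z$ against nef classes; this is what turns the leftover $(N+1)$-type term into $\tfrac{1}{2d}L^2+O(1)$ rather than $\tfrac1d L^2+O(1)$.

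Two smaller points. First, your reading of $\varepsilon$ is only half right: $\varepsilon=1$ does lower the additive constant to $\tfrac{d+1}{2}$ via $r\le\tfrac{d+1}{2}$, as you say, but its real content is that the coefficient of $L^2$ \emph{worsens} to $\tfrac{3}{4d}$, because in the odd-degree hyperelliptic case the parity trick fails at the first step of the filtration ($d_0-d_1\ge1$ only) — your sketch does not account for this. Second, when a graded piece $V_j/V_{j+1}$ restricts to a non-special system on $F$ you must switch from Clifford to Riemann--Roch ($r_j\le\tfrac12 d_j$), and the paper's case division according to where along the filtration the restrictions become special is needed to make the constants close up; this is routine but not optional.
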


Note that this inequality is sharp and $\varepsilon$ is necessary. For example, let $f$ be a trivial hyperelliptic fibration
and $L$ be the pullback of a divisor $D$ on $F$ satisfying $2h^0(D)=\deg D + 1$. In this case, we obtain the equality with $\varepsilon=1$.

The most interesting case of the theorem occurs when $L$ is the \textit{relative dualizing sheaf
$\omega_f=\omega_{X/Y}=\omega_{X/k}\otimes f^*\omega_{Y/k}^{\vee}$}. Let $f: X \to Y$ be a surface fibration of $g \ge 2$. We say that $f$ is \textit{relatively minimal} if
$X$ contains no $(-1)$-curves in fibers. In this situation, it is known that $\omega_f$
is nef.

\begin{theorem}[Relative Noether inequality] \label{relnoetheromega}
Let $f: X \to Y$ be a relatively minimal  fibration of genus $g \ge 2$ over $k$. Then
$$
h^0(\omega_f) \le \frac{g}{4g-4} \omega^2_f + g.
$$
\end{theorem}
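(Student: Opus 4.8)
The plan is to deduce Theorem \ref{relnoetheromega} directly from Theorem \ref{relnoether} by taking $L = \omega_f$, so that the whole argument reduces to (i) checking the hypotheses of the main theorem and (ii) simplifying the resulting bound in the boundary case $d = 2g-2$. Since $f$ is relatively minimal, $\omega_f$ is nef, which is exactly the hypothesis imposed on $L$; thus the only genuine computation is to identify the relative degree $d = \deg(\omega_f|_F)$ and to pin down the value of $\varepsilon$.

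First I would compute $d$. For a general fiber $F = f^{-1}(y)$, the line bundle $f^*\omega_{Y/k}$ restricts trivially to $F$, being pulled back from the point $y$, so $\omega_f|_F = \omega_{X/k}|_F$. On the other hand, $\mathcal{O}_X(F) = f^*\mathcal{O}_Y(y)$ also restricts trivially to $F$, so the adjunction formula $\omega_F = (\omega_{X/k} \otimes \mathcal{O}_X(F))|_F$ gives $\omega_f|_F \cong \omega_F$. Since $F$ is a geometrically integral curve of arithmetic genus $g$, this yields $d = \deg \omega_F = 2g - 2$. In particular the range condition $2 \le d \le 2g-2$ holds for every $g \ge 2$ (with equality on the right-hand end), so $L = \omega_f$ satisfies all the hypotheses of Theorem \ref{relnoether}.

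Next I would determine $\varepsilon$. The third clause of the hyperelliptic condition does hold here, since $L|_F = \omega_F = K_F$; however the clause requiring $d$ to be odd fails, because $d = 2g-2$ is always even. Hence $\varepsilon = 0$ regardless of whether $F$ is hyperelliptic, and this parity observation is the one point where a little care is needed, since the hyperelliptic alternative is precisely what one might naively worry about. Substituting $d = 2g-2$ and $\varepsilon = 0$ into Theorem \ref{relnoether} gives
$$
h^0(\omega_f) \le \left(\frac14 + \frac{2}{4(2g-2)}\right)\omega_f^2 + \frac{(2g-2)+2}{2}.
$$
Finally I would simplify the coefficients, using $\frac14 + \frac{1}{2(2g-2)} = \frac{g}{4g-4}$ and $\frac{2g}{2} = g$, which yields exactly the claimed inequality. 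The proof is therefore a specialization of the main theorem, and there is no real obstacle beyond the routine adjunction computation identifying $\omega_f|_F$ and the parity remark forcing $\varepsilon = 0$.
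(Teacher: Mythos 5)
Your proposal is correct and follows exactly the paper's own route: the paper likewise notes that relative minimality makes $\omega_f$ nef, that $\omega_f F = 2g-2$ is even (forcing $\varepsilon = 0$), and then specializes Theorem \ref{relnoether}; you have merely spelled out the adjunction computation and the arithmetic $\frac14 + \frac{1}{4g-4} = \frac{g}{4g-4}$ that the paper leaves implicit.
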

The equality here can be obtained for certain nontrivial hyperelliptic fibration constructed in \cite{Xi2}.

The theorem can be viewed as a relative version of the classical Noether inequality on algebraic surfaces.
Recall that if $X$ is a minimal surface of general type over $k$, and
$\omega_X$ is the canonical bundle of $X$, then the
Noether inequality asserts that
$$
h^0(\omega_X) \le \frac 12 \omega^2_X +2.
$$
See \cite{BHPV} for $\charr k=0$ and \cite{Li1, Li2} for $\charr k>0$.
If the equality holds, then $X$ has a hyperelliptic pencil (cf.\cite{Ho}).

\subsection{Idea of the proof: a new filtration}
The idea of the proof of Theorem \ref{relnoether} is to construct a filtration. It comes from \cite{YZ}, where the arithmetic version
of this filtration is very natural in the setting of Arakelov geometry when studying the arithmetic linear series on arithmetic varieties.

Let $L$ be a nef line bundle on $X$.
We can find the smallest integer $e_L>0$ such that $L-e_LF$ is not nef. Then we take $L_1$ to be the movable part
of $|L-e_LF|$. Inductively, we get a filtration
$$
L = L_0 \ge L_1 \ge \cdots \ge L_n \ge 0.
$$
It corresponds to a family of decompositions:
$$
L=L_i+N_i+\sum_{j=0}^{i-1}e_{L_j}F
$$
Denote $L'_i=L_i-e_{L_i}F$ and one can easily compare the following two invariants:
$$
h^0(L'_i)-h^0(L'_{i+1}), \quad L'^2_i - L'^2_{i+1}.
$$
By taking the summation over $i$, we can prove Theorem \ref{relnoether}.

In \cite{Xi1}, Xiao introduced a filtration to study the geometry of fibered surface, namely, the Harder-Narasimhan
filtration. We would like to point out that our construction is essentially different from Xiao's method. In fact, Xiao's method
relies heavily on the (semi)stability of vector bundles on curves. Particularly, it is based on the fact that
\textit{the pull-back of a semistable vector bundle by a finite map between curves is still semistable}. This fact holds in characteristic zero, but fails in positive characteristics. For example, the Frobenius pullback can destabilize a (semi)stable bundle.
This is one main obstruction for Xiao's method to be available in full generality.
Another obstruction is that $f_*\omega_f$ is \textit{no longer} semipositive in positive characteristics. See \cite{MB} for example.

On the other hand, our filtration is constructed by some basic properties of linear series without any stability consideration. Hence it can be applied
regardless of the characteristic of the base field. It even works very well for arithmetic surfaces in \cite{YZ}, where the arithmetic versions of Theorem \ref{relnoether} and
\ref{relnoetheromega} are proved.

\subsection{New method towards the slope inequality}
An interesting consequence of our new filtration and relative Noether inequality is that, it gives a new outlook of the slope inequality. More precisely,
we can get a \textit{stability-free} proof of the slope inequality.

The slope inequality asserts that
$$
\omega^2_f \ge \frac{4g-4}{g} \deg (f_* \omega_f)
$$
for a relative minimal fibration of genus $g \ge 2$. In fact, all the known proofs are related to some sort of stability: geometric invariant theory in \cite{CH} (in characteristic $0$), stability of vector bundle in \cite{Xi1} (in charactistic $0$), Chow stability in \cite{Mo}.

Our treatment is completely different from any of the above. We find that in positive characteristics,
$$
({\rm relative \  Noether})  + ({\rm Frobenius \ iteration}) \Longrightarrow
({\rm slope \ inequality}).
$$
Using this idea, we only need to apply our relative Noether inequality to the Frobenius pull-back of the original fibration and take the limit.
The slope inequality in positive characteristics follows easily. In particular, if $g(Y) \le 1$, we can prove that the slope inequality even holds for fibrations whose generic fiber is not smooth. As a result, we prove the Arakelov inequality in positive characteristics, which improves
a result of \cite{LSZ}.

It turns out that this idea can even be applied to study the similar problem in characteristic $0$. In this case, the strategy can be expressed in short:
$$
({\rm slope \,\, ineq. \,\, in \ pos.\ char.})  +
({\rm reduction \,\, mod} \, \wp) \Longrightarrow
({\rm slope \, \,ineq. \,\, in\ char. \ zero}).
$$
Namely, in order to get the slope inequality in characteristic $0$, it is enough to prove it in positive characteristics.

It should be pointed out that in characteristic $0$, our filtration may not be as strong as the Harder-Narasimhan filtration in \cite{Xi1}. Nevertheless, when $g(Y) \le 1$, our method can recover the original slope inequality. When $g(Y) \ge 2$, our filtration might be slightly weaker. But at least for semistable fibrations, the relative Noether inequality works perfectly. Therefore, our method is definitely promising for studying the moduli space of curves.

\subsection{Geography problems}
Another related topic is the geography problem for irregular surfaces in positive characteristic.

The geography problem asks for the region of points in $\mathbb Z^2$ which are equal to $(c^2_1(X), c_2(X))$ for certain minimal surface $X$.
We refer to \cite{BHPV} for details over $\mathbb C$. However, this problem turns out to be very subtle in positive characteristics.
For example, $c_2$ can be negative for surfaces of general type in positive characteristics, and the positivity of $\chi$ is still unknown.
We refer to \cite{Li1, S-B} for details.

We study this problem in positive characteristics.
For example, we prove the Severi inequality in positive characteristics, i.e.,
$$
\omega^2_{X/k} \ge 4 \chi(\mathcal O_X)
$$
for smooth minimal surface $X$ of general type whose Albanese map is generically finite. In this situation, we know that
$\chi(\mathcal O_X)>0$ (c.f. \cite{Li1, S-B}).
When $k=\mathbb C$, it is proved by Pardini \cite{Pa} using a clever covering trick and the slope inequality of Xiao \cite{Xi1}.

It is worth mentioning that both the result and the method are new.
The behavior of the Albanese map in positive characteristics is not as good as in characteristic $0$.
See \cite{Li3} or \cite[Theorem 8.7]{Li1} for example, where the Albanese map is purely inseparable.
The covering trick in \cite{Pa} is still available, but due to the failure
of the Bertini's theorem for base-point-free line bundle in positive characteristic \cite{Jo}, the usual slope inequality for fibrations with smooth general fiber
is not enough for here. In particular, Moriwaki's result \cite{Mo} can not be applied in this case. Also, as we have mentioned before,
Xiao's method can not be applied here, either.

Nevertheless, our starting point is again different. We find that
$$
({\rm relative \,\, Noether}) + ({\rm covering \,\, trick}) \Longrightarrow
({\rm Severi \,\, inequality}).
$$
In this sense, the slope inequality is \textit{no longer} needed in the proof, even in characteristic $0$. In fact, the idea works uniformly in all characteristics.

It is also worth mentioning that our idea
works for arbitrary dimension. Just applying this idea, the higher-dimensional Severi inequality in characteristic $0$ is proved recently by the second author in \cite{Zh}. While, the slope inequality for higher dimensional fibration
is still far from being known.

Based on the same idea, we also study the case when the Albanese map of $X$ induces a fibration. We prove that
for smooth minimal surface $X$ of general type in positive characteristic with nontrivial Albanese map, the following are true:
\begin{itemize}
\item[(1)] $\omega^2_{X/k} \ge 2\chi(\mathcal O_X)$;
\item[(2)] If $2\chi(\mathcal O_X) \le \omega^2_{X/k} < 8 \chi(\mathcal O_X)/3$, then the Albanese map of $X$ induces a pencil
of genus $2$ curves.
\item[(3)] If $8 \chi(\mathcal O_X)/3 \le \omega^2_{X/k} < 3 \chi(\mathcal O_X)$, then the Albanese map of $X$ induces a pencil
of genus $2$ curves or hyperelliptic curves of genus $3$.
\end{itemize}
The corresponding results over $\mathbb C$ are proved by Bombieri \cite{Bom} for (1) and Horikawa \cite{Ho2} for (2) and (3).
In particular, in \cite{Ho2}, the slope inequality is essentially used.

Our proof of the result is again by the relative Noether formula. Simply speaking, we use
$$
({\rm relative \,\, Noether}) + ({\rm  base \,\, change})
$$
to prove these results. It also works in characteristic $0$.

\bigskip

{\bf Acknowledgement.}
The authors would like to thank Yanhong Yang for the communication explaining the subtlety of the stability in positive characteristic.
They are grateful to Xi Chen for a lot of valuable help and suggestions, and also Huayi Chen, Jun Lu, Sheng-Li Tan, Hang Xue, Kang Zuo for their interests on this paper. Special thanks goes to Christian Liedtke for his careful reading of the earlier version of this paper, his valuable comments and his unpublished preprint.

The first author is supported by the grant DMS-1161516 of the National Science Foundation.

\section{Proof of the relative Noether inequality}
In this section, we will prove Theorem \ref{relnoether} and \ref{relnoetheromega}.
The idea comes from \cite{YZ}, where a very similar argument was used to prove
an effective Hilbert-Samuel formula for arithmetic surfaces.


\subsection{The reduction process}
We first resume our notations.
Let $f: X \to Y$ be a  surface fibration, and $F$
be a general fiber. For any nef line bundle $L$ on $X$, we
can find a unique integer $c$ such that
\begin{itemize}
\item $L-cF$ is not nef;
\item $L-c'F$ is nef for any integer $c'<c$.
\end{itemize}
Denote this number $c$ for $L$ by $e_L$.
Since $L$ itself is nef, one has $e_L>0$.

We have the following lemma.
\begin{lemma} \label{onestep}
Let $f: X \to Y$ be a  surface fibration, and $F$
be a general fiber. Let $L$ be a nef line bundle on $X$ such that $LF>0$ and $h^0(L-e_LF) > 0$.
Then the linear system $|L-e_LF|$ has a fixed part $Z>0$. Moreover,
$$
ZF>0.
$$
\end{lemma}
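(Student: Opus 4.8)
The plan is to rely on a single structural fact---that the \emph{movable part} of any linear system on a smooth projective surface is nef---and then to compare intersection numbers against $F$. Write $L-e_LF=P+Z$, where $P$ is the movable part of $|L-e_LF|$ (well defined since $h^0(L-e_LF)>0$) and $Z\ge 0$ is the fixed part. Since $|P|$ has no fixed component, no irreducible curve lies on every member, so for a given irreducible $C$ the members containing $C$ form a proper linear subsystem; a general $D\in|P|$ therefore avoids $C$ and $P\cdot C=D\cdot C\ge 0$. Thus $P$ is nef. By the very definition of $e_L$, however, $L-e_LF$ is \emph{not} nef, so $P\ne L-e_LF$ and the fixed part $Z=(L-e_LF)-P$ is a nonzero effective divisor, i.e. $Z>0$. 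This settles the first assertion.

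For the inequality $ZF>0$ I would argue by contradiction, assuming $ZF=0$, and aim to deduce the contradictory conclusion that $L-e_LF$ is nef. First, since $F$ is nef and $Z=\sum_i a_iZ_i$ with $a_i>0$ and each $Z_i\cdot F\ge 0$, the relation $ZF=\sum_i a_i(Z_i\cdot F)=0$ forces $Z_i\cdot F=0$ for every component, so every component of $Z$ is vertical for $f$ (consistent with $(L-e_LF)\cdot F=L\cdot F>0$ by $F^2=0$ and the hypothesis $LF>0$). Next I invoke the minimality of $e_L$: the class $(L-e_LF)+F=L-(e_L-1)F$ is nef. Hence for any irreducible curve $C$ with $(L-e_LF)\cdot C<0$---one exists because $L-e_LF$ is not nef---one gets $F\cdot C\ge -(L-e_LF)\cdot C>0$, so every such $C$ is horizontal.

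Finally I would play the two halves of the decomposition against each other along such a $C$. From $(L-e_LF)\cdot C=P\cdot C+Z\cdot C$ together with $P\cdot C\ge 0$, the inequality $(L-e_LF)\cdot C<0$ forces $Z\cdot C<0$; as $Z$ is effective, a curve on which it is negative must be one of its components, so $C=Z_j$ for some $j$, whence $C\cdot F=Z_j\cdot F=0$---contradicting $C\cdot F>0$. Thus no curve makes $L-e_LF$ negative, so $L-e_LF$ is nef, contradicting the definition of $e_L$; therefore $ZF>0$. The only genuinely delicate inputs are the nefness of the movable part $P$ and the clean horizontal/vertical dichotomy furnished by $F^2=0$ and $L-(e_L-1)F$ being nef; the remaining step, that a curve with $Z\cdot C<0$ must be a component of the effective divisor $Z$, is routine on a smooth surface and is the only place requiring a little care.
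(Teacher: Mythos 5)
Your proof is correct and rests on the same two pillars as the paper's: the movable part is nef, and any curve on which $L-e_LF$ is negative must therefore be a horizontal component of the fixed part $Z$. The paper simply runs this directly (from $(L-e_LF)C<0$ it gets $Z\ge C>0$, then $FC>0$ from $LC\ge 0$ and $e_L>0$, hence $ZF\ge CF>0$ since $F$ is nef on the other components of $Z$), whereas you package the second claim as a proof by contradiction and derive horizontality from the nefness of $L-(e_L-1)F$ instead of that of $L$; these are cosmetic differences.
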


\begin{proof}
By our assumption, we can write
$$
|L-e_LF|=|L_1| + Z,
$$
where $|L_1|$ is the movable part, and $Z$ is the fixed part. Since $L-e_LF$ is not nef, there
exists an irreducible and reduced curve $C$ on $X$ such that
$$
(L-e_LF)C<0,
$$
which implies $Z \ge C > 0$. Moreover, $LC \ge 0$ since $L$ is nef.
Therefore, $FC>0$ and
$ZF \ge FC > 0$.
\end{proof}

We have the following general theorem.
\begin{theorem}
Let $f: X \to Y$ be a  surface fibration, and $F$
be a general fiber. Let $L$ be a nef line bundle on $X$ such that $LF>0$ and $h^0(L) > 0$.
We can get the following sequence of triples
$$
\{(L_i, Z_i, a_i): i=0, 1, \cdots, n\}
$$
such that
\begin{itemize}
\item $(L_0, Z_0, a_0)=(L, 0, e_L)$ and $a_i=e_{L_i}$ for $i \ge 0$;
\item We have
$$
|L_{i-1}-e_{L_i}F|=|L_i|+Z_i,
$$
where $L_i$ (resp. $Z_i$) is the movable part (resp. fixed part) of the linear system $|L_{i-1}-e_{L_i}F|$ for $0<i<n$;
\item $h^0(L_n-a_nF)=0$;
\item $LF=L_0F > L_1F > \cdots > L_nF \ge 0$.
\end{itemize}

\end{theorem}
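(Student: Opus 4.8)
The plan is to build the sequence recursively, applying the reduction of Section 2.1 one step at a time and using the fiber degree $L_iF$ as a termination measure. I would begin from the base triple $(L_0,Z_0,a_0)=(L,0,e_L)$, which is legitimate since $L$ is nef with $LF>0$ and $h^0(L)>0$. Given a nef line bundle $L_{i-1}$ with $h^0(L_{i-1})>0$, I would set $a_{i-1}=e_{L_{i-1}}>0$ and examine $h^0(L_{i-1}-a_{i-1}F)$. If this vanishes I stop and put $n=i-1$, which immediately gives the third bullet $h^0(L_n-a_nF)=0$. Otherwise I apply Lemma \ref{onestep} to $L_{i-1}$: the system $|L_{i-1}-a_{i-1}F|$ then has a nonzero fixed part, and I take $L_i$ to be its movable part and $Z_i>0$ its fixed part, which records the decomposition of the second bullet.

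The properties needed to keep the recursion going and to read off the first three bullets are routine. The movable part of a linear system on a smooth surface is nef, since for any irreducible curve $C$ a member not containing $C$ exists (otherwise $C$ would be fixed), so each $L_i$ is again nef; and passing to the movable part does not change the number of sections, so $h^0(L_i)=h^0(L_{i-1}-a_{i-1}F)>0$ and the inductive hypotheses are preserved.

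The heart of the matter is the last bullet, the strict decrease $L_{i-1}F>L_iF$, which simultaneously forces termination. Writing $L_i+Z_i\sim L_{i-1}-a_{i-1}F$ and intersecting with $F$, I would use $F^2=0$ to obtain $L_iF=L_{i-1}F-Z_iF$, while Lemma \ref{onestep} supplies $Z_iF>0$; hence $L_iF<L_{i-1}F$. Since each $L_i$ is nef, $L_iF\ge 0$, so $\{L_iF\}$ is a strictly decreasing sequence of non-negative integers and the process halts after at most $LF$ steps, yielding the chain $LF=L_0F>\cdots>L_nF\ge 0$.

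The one point requiring genuine care, and which I expect to be the \emph{main obstacle}, is verifying that the hypothesis $L_{i-1}F>0$ of Lemma \ref{onestep} is automatically satisfied at every non-terminal step, i.e.\ that $h^0(L_{i-1}-a_{i-1}F)>0$ cannot coexist with $L_{i-1}F=0$. I would argue by contradiction: if $L_{i-1}F=0$ with $L_{i-1}$ nef, then the Hodge index theorem forces $L_{i-1}^2=0$ and, by Zariski's lemma, $L_{i-1}$ is numerically a non-negative multiple of the fiber $F$. By the definition of $e_{L_{i-1}}$ the class $L_{i-1}-a_{i-1}F$ is then numerically $\mu F$ with $\mu<0$, which cannot be effective; hence $h^0(L_{i-1}-a_{i-1}F)=0$ and the process would already have stopped. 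This closes the recursion and shows the stopping criterion is precisely $h^0(L_n-a_nF)=0$.
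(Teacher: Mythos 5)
Your proposal is correct and follows essentially the same route as the paper: iterate Lemma \ref{onestep}, with termination guaranteed by the strict decrease of $L_iF$ forced by $Z_iF>0$. The extra verifications you supply (nefness of the movable part, preservation of $h^0$, and the Hodge-index argument showing $L_{i-1}F>0$ cannot fail at a non-terminal step) are points the paper's three-line proof leaves implicit, and they check out.
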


\begin{proof}
The triple $(L_{i+1}, Z_{i+1}, a_{i+1})$ can be obtained by applying Lemma \ref{onestep} to the triple
$(L_{i}, Z_{i}, a_{i})$. The whole process will terminate when $h^0(L_i-a_iF)=0$. It always terminates
because by Lemma \ref{onestep}, $L_iF$ decreases strictly.
\end{proof}

Now, we denote $r_i=h^0(L_i|_F)$, $d_i=L_iF$ and $L'_i=L_i-a_iF$.

\begin{prop} \label{algcase1}
For any $j=0, 1, \cdots n$, we have
\begin{eqnarray*}
h^0(L_0) & \le & h^0(L'_j) + \sum_{i=0}^j a_{i} r_{i}; \\
{L_0}^2 & \ge & 2a_0d_0 + \sum_{i=1}^j a_{i}(d_{i-1}+d_i) - 2d_0.
\end{eqnarray*}
\end{prop}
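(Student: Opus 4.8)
The plan is to carry the two estimates in parallel along the filtration and to telescope, reading one step as: subtract $a_{i-1}F$ from $L_{i-1}$ and pass to the movable part. I will use repeatedly that $F^2=0$ and $\mathcal O_X(F)|_F\cong\mathcal O_F$, so that $L_iF=d_i$, every twist satisfies $(L_i-kF)|_F\cong L_i|_F$, and the decomposition $L'_{i-1}=L_i+Z_i$ into movable and fixed parts gives $Z_iF=d_{i-1}-d_i$. The only positivity input beyond nefness of the $L_i$ is that, by minimality of $a_i=e_{L_i}$, the twist $M_i:=L_i-(a_i-1)F$ is nef for every $i$.

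For the first inequality I would restrict to a general fibre: the sequence $0\to\mathcal O_X(L_i-F)\to\mathcal O_X(L_i)\to\mathcal O_F(L_i|_F)\to0$ gives $h^0(L_i)\le h^0(L_i-F)+r_i$, and iterating this $a_i$ times (valid since $(L_i-kF)|_F\cong L_i|_F$) yields $h^0(L_i)\le h^0(L'_i)+a_ir_i$. Because $L_{i+1}$ is the movable part of $|L'_i|$, multiplication by the fixed section cutting out $Z_{i+1}$ identifies the two spaces of sections, so $h^0(L'_i)=h^0(L_{i+1})$. Telescoping these over $i=0,\dots,j$, and stopping at $h^0(L'_j)$ rather than converting it, gives $h^0(L_0)\le h^0(L'_j)+\sum_{i=0}^{j}a_ir_i$.

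For the second inequality I would square $L_{i-1}=L_i+Z_i+a_{i-1}F$; using $F^2=0$ and $F(L_i+Z_i)=d_{i-1}$ this becomes the exact identity
\[ L_{i-1}^2=L_i^2+Z_i(2L_i+Z_i)+2a_{i-1}d_{i-1}. \]
Summing over $i=1,\dots,j$ writes $L_0^2=L_j^2+2\sum_{i=0}^{j-1}a_id_i+\sum_{i=1}^{j}Z_i(2L_i+Z_i)$, and it remains to bound the two implicit pieces from below. For the tail, $M_j^2\ge0$ together with $M_j^2=L_j^2-2(a_j-1)d_j$ gives $L_j^2\ge2(a_j-1)d_j$. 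For the cross terms the key is the identity
\[ 2L_i+Z_i=M_i+M_{i-1}+(a_i-2)F; \]
intersecting it with the effective divisor $Z_i$ and using that $M_i,M_{i-1}$ are nef while $Z_iF=d_{i-1}-d_i$ yields $Z_i(2L_i+Z_i)\ge(a_i-2)(d_{i-1}-d_i)$. Substituting both bounds and simplifying collapses the telescope to exactly $2a_0d_0+\sum_{i=1}^{j}a_i(d_{i-1}+d_i)-2d_0$, as claimed.

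I expect the lower bound on the cross term $Z_i(2L_i+Z_i)$ to be the main obstacle. The fixed-part self-intersection $Z_i^2$ may be strongly negative, so it cannot be dropped, and the naive estimate $Z_i(2L_i+Z_i)\ge-(d_{i-1}-d_i)$ is too weak: it loses a factor of $a_i$ and fails to recover the sharp right-hand side once some $a_i\ge2$. The resolution is never to estimate $Z_i^2$ in isolation, but to rewrite $2L_i+Z_i$ as the nef combination $M_i+M_{i-1}+(a_i-2)F$ and intersect against the effective $Z_i$. This is precisely the step where the defining minimality of $a_i=e_{L_i}$ enters, through nefness of $M_i$ and $M_{i-1}$, and it is what produces the coefficient $(a_i-2)$ that makes the telescope match the stated bound.
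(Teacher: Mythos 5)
Your proof is correct and takes essentially the same route as the paper: for the section count, restriction to a general fibre iterated $a_i$ times together with the identification $h^0(L'_i)=h^0(L_{i+1})$; for the self-intersection, a telescope whose only positivity inputs are the nefness of $M_i=L_i-(a_i-1)F$ (from minimality of $a_i=e_{L_i}$) and the effectivity of $Z_i$. The paper merely organizes the second half differently, bounding $L'^2_i-L'^2_{i+1}$ with $L'_i=L_i-a_iF$ via $(L'_i+L'_{i+1})Z_{i+1}=[(L'_i+F)+(L'_{i+1}+F)-2F]Z_{i+1}\ge -2Z_{i+1}F$, which is your decomposition $2L_i+Z_i=M_i+M_{i-1}+(a_i-2)F$ up to reindexing.
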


\begin{proof}
We have the short exact sequence:
$$
0 \longrightarrow H^0(L_{i+1}-F) \longrightarrow H^0(L_{i+1}) \longrightarrow H^0(L_{i+1}|_F).
$$
Then it follows that
$$
h^0(L_{i+1}-F) \le h^0(L_{i+1})-h^0(L_{i+1}|_F)=h^0(L_{i+1})-r_{i+1}.
$$
By induction, we have
$$
h^0(L'_{i+1})=h^0(L_{i+1}-a_{i+1}F) \le h^0(L_{i+1})-a_{i+1}r_{i+1} = h^0(L'_{i})-a_{i+1}r_{i+1}.
$$
Note that both $L'_i+F$ and $L'_{i+1}+F$ are nef, and $Z_i$ is effective. We get
\begin{eqnarray*}
{L'_{i}}^2-{L'^2_{i+1}} & = & (L'_i+L'_{i+1})(L'_i-L'_{i+1}) \\
&=& (L'_i+L'_{i+1})(a_{i+1}F+Z_{i+1}) \\
&=& a_{i+1}(L'_i+L'_{i+1})F + [(L'_i+F)+(L'_{i+1}+F)-2F]Z_{i+1}\\
& \ge & a_{i+1}(d_i+d_{i+1})-2Z_{i+1}F \\
& \ge & a_{i+1}(d_i+d_{i+1})-2(d_i-d_{i+1})
\end{eqnarray*}

For any $j=0, 1, \cdots, n-1$, summing over $i=0, 1, \cdots, j$, we have
\begin{eqnarray*}
h^0(L'_0) & \le & h^0(L'_j) + \sum_{i=1}^j a_i r_i; \\
L'^2_0 & \ge & {L'^2_{j}} + \sum_{i=1}^j a_{i}(d_{i-1}+d_i) - 2(d_0-d_j).
\end{eqnarray*}
Since we still have
\begin{eqnarray*}
L_0^2-{L'_0}^2 & = & 2a_0d_0, \\
L'^2_{j} + 2d_j & = & (L'_j + F)^2 \ge 0, \\
h^0(L_0) & \le & h^0(L'_0)+a_0r_0, \\
\end{eqnarray*}
the result follows.
\end{proof}

We also have the following lemma.

\begin{lemma}\label{algsumai}
In the above setting, we have
$$
2a_0+\sum_{i=1}^n a_i -2 \le \frac{L_0^2}{d_0}.
$$
\end{lemma}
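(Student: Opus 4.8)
The plan is to avoid invoking Proposition~\ref{algcase1} directly: its bound with $j=n$ is in fact \emph{too weak} here, because the coefficients $a_i(d_{i-1}+d_i-d_0)$ arising from it can be negative for large $i$ (e.g. $d_0=3,d_1=2,d_2=1,d_3=0$ makes the $i=3$ term equal to $-2a_3$), so summing them need not reconstruct the desired $d_0\sum a_i$. Instead I would argue directly from the telescoped decomposition of $L_0$. Writing $A=\sum_{i=0}^{n-1}a_i$ and $Z=\sum_{i=1}^{n}Z_i$, the relations $L_{i}\equiv L_{i+1}+Z_{i+1}+a_iF$ telescope to
$$
L_0\equiv L_n+Z+AF,
$$
with $L_n$ nef and $Z$ effective. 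Intersecting with the nef divisor $L_0$ and using $F^2=0$ and $L_0F=d_0$ gives the exact identity $L_0^2=L_0L_n+L_0Z+Ad_0$. The naive move is to discard $L_0L_n\ge0$ and $L_0Z\ge0$, which only yields $L_0^2\ge Ad_0$; this falls short of the target by exactly $(a_0+a_n-2)d_0$, so the entire point is to recover this deficit from the two intersection terms.

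The heart of the argument is the observation that, by the very definition of $e_{L}$, both $L_0-(a_0-1)F$ and $L_n-(a_n-1)F$ are nef, since $a_0-1<a_0=e_{L_0}$ and $a_n-1<a_n=e_{L_n}$. First, because $L_0-(a_0-1)F$ is nef and $Z$ is effective, I would write $L_0Z=(L_0-(a_0-1)F)Z+(a_0-1)(ZF)\ge(a_0-1)(d_0-d_n)$, using $ZF=\sum_i(d_{i-1}-d_i)=d_0-d_n$. Second, setting $P=L_0-(a_0-1)F$ and $Q=L_n-(a_n-1)F$ and expanding $L_0L_n=(P+(a_0-1)F)(Q+(a_n-1)F)$, the nefness $PQ\ge0$ together with $PF=d_0$ and $QF=d_n$ yields $L_0L_n\ge(a_n-1)d_0+(a_0-1)d_n$.

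Substituting both estimates, the $d_n$-dependent contributions combine as $(a_0-1)d_n+(a_0-1)(d_0-d_n)=(a_0-1)d_0$, so all dependence on $d_n$ cancels and
$$
L_0^2\ \ge\ (a_n-1)d_0+(a_0-1)d_0+Ad_0\ =\ d_0\,(A+a_0+a_n-2).
$$
Reindexing $A+a_0+a_n-2=\sum_{i=0}^{n-1}a_i+a_0+a_n-2$ turns the right-hand factor into $2a_0+\sum_{i=1}^{n}a_i-2$, and dividing by $d_0>0$ completes the proof. The main obstacle is conceptual rather than computational: one must recognize that the extra $a_0+a_n$ beyond the crude bound $Ad_0$ comes precisely from the two ``one-fiber-less'' nef divisors $L_0-(a_0-1)F$ and $L_n-(a_n-1)F$, and then verify that the bookkeeping makes the unwanted $d_n$ terms cancel exactly.
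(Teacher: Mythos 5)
Your proof is correct and is essentially the paper's own argument in slightly different bookkeeping: both rest on the telescoped decomposition $L_0=L_n+Z+\big(\sum_{i=0}^{n-1}a_i\big)F$ and on the nefness of the two ``one-fiber-less'' divisors $L_0-(a_0-1)F$ and $L_n-(a_n-1)F$ (the paper's $L'_0+F$ and $L'_n+F$). The paper squares the shifted decomposition and discards the $Z$-terms, whereas you pair the unshifted decomposition against $L_0$ and recover the deficit via $ZF=d_0-d_n$; the resulting inequality is identical.
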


\begin{proof}
Denote $b=a_1+\cdots+a_n$ and $Z=Z_1+ \cdots +Z_n$. We have the following linear equivalence
$$
L'_0 = L'_n + bF + Z.
$$
Since $L'_0+F$ and $L'_n+F$ are both nef, it follows that
\begin{eqnarray*}
(L'_0+F)^2& = &(L'_0+F)(L'_n+F+bF+Z) \\
& \ge & (L'_n+F+bF+Z)(L'_n+F) + bd_0 \\
& \ge & (L'_n+F)^2+b(d_0+d_n).
\end{eqnarray*}
Combine with
$$
L_0^2-(L'_0+F)^2=2(a_0-1)d_0.
$$
We get
$$
L_0^2 \ge (L'_n+F)^2+ 2(a_0-1)d_0 +b(d_0+d_n) \ge d_0(2a_0+b-2).
$$
\end{proof}

\subsection{Linear systems on curves}
We need several results on linear systems on algebraic curves.

Let $C$ be a reduced, irreducible Gorenstein curve over $k$. We say a line bundle $L$
is \textit{special} if
$$
h^0(L)>0, \qquad h^1(L)>0.
$$

We have the following theorem.

\begin{theorem} \label{h0bound}
Let $C$ be an irreducible, reduced Gorenstein curve over $k$, $p_a(C) \ge 2$. Let $L$ be a line bundle on $C$ such that $h^0(L)>0$ and $\deg(L) \le 2p_a(C)-2$.
\begin{itemize}
\item[(1)] {\rm [Clifford's Theorem]} If $L$ is special, then
$$
h^0(L) \le \frac 12 \deg(L) +1.
$$
Moreover, if $C$ is not hyperelliptic, then the equality holds if and only if $L=\mathcal{O}_C$ or $L=\omega_{C/k}$.
\item[(2)] If $h^1(L)=0$, then
$$
h^0(L) \le \frac 12 \deg(L).
$$
\end{itemize}
\end{theorem}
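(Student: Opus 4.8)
The plan is to handle the two parts separately: part (2) is an immediate consequence of Riemann--Roch, while part (1), Clifford's inequality, carries all the content.

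For part (2), I would simply invoke Riemann--Roch on the Gorenstein curve $C$. Since $C$ is integral we have $\chi(\mathcal{O}_C)=1-p_a(C)$, so $h^0(L)-h^1(L)=\deg(L)+1-p_a(C)$. The hypothesis $h^1(L)=0$ then gives $h^0(L)=\deg(L)+1-p_a(C)$, and substituting $\deg(L)\le 2p_a(C)-2$, equivalently $p_a(C)\ge \tfrac12\deg(L)+1$, yields $h^0(L)\le \tfrac12\deg(L)$ at once. No geometry beyond Riemann--Roch is needed here.

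For part (1), the first step is to use that $C$ is Gorenstein, so that $\omega_{C/k}$ is a line bundle and Serre duality applies: $h^1(L)=h^0(\omega_{C/k}\otimes L^{\vee})$. Thus $L$ being special means both $L$ and $M:=\omega_{C/k}\otimes L^{\vee}$ carry nonzero sections. Since $h^0(\omega_{C/k})=h^1(\mathcal{O}_C)=p_a(C)$, the Clifford bound will follow from Riemann--Roch once I establish the key inequality
\[
h^0(\omega_{C/k})\ \ge\ h^0(L)+h^0(M)-1 .
\]
Indeed, combining this with $h^0(M)=h^1(L)=h^0(L)-\deg(L)-1+p_a(C)$ eliminates $p_a(C)$ and produces exactly $2h^0(L)\le \deg(L)+2$. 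To prove the key inequality I would exploit that $C$ is integral, so its function field $K(C)$ is genuinely a field. Fixing trivializations of $L$, $M$ and $\omega_{C/k}$ at the generic point, the spaces $H^0(L)$, $H^0(M)$ and $H^0(\omega_{C/k})$ embed compatibly into $K(C)$, and the multiplication map $H^0(L)\otimes H^0(M)\to H^0(\omega_{C/k})$ becomes ordinary multiplication in $K(C)$. The inequality is then the linear analogue of the Cauchy--Davenport theorem in a field: for nonzero finite-dimensional $k$-subspaces $V,W\subseteq K(C)$ one has $\dim_k(V\cdot W)\ge \dim_k V+\dim_k W-1$. It is worth stressing that Gorenstein-ness is used only for Serre duality, whereas integrality (not smoothness) is what powers the multiplicative estimate; this is precisely why the statement survives for singular Gorenstein curves and in arbitrary characteristic.

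The main obstacle is the equality clause for non-hyperelliptic $C$. Here I would appeal to the structure theorem for the extremal case $\dim_k(V\cdot W)=\dim_k V+\dim_k W-1$: when $\dim_k V,\dim_k W\ge 2$ this forces $V$ and $W$ to be, up to a common scalar, spanned by consecutive powers $1,t,\dots,t^a$ and $1,t,\dots,t^b$ of a single nonconstant $t\in K(C)$. Since all these powers have poles bounded by a fixed divisor of degree $\deg(L)$ (resp.\ $\deg(M)$), the pole divisor of $t$ can have degree at most $2$, so the pencil $|t|$ exhibits $C$ as a cover of $\mathbb{P}^1_k$ of degree at most $2$; degree $2$ makes $C$ hyperelliptic, which is excluded. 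Hence one of $\dim|L|=h^0(L)-1$ and $\dim|M|=h^0(M)-1$ must vanish. If $h^0(L)=1$, equality in Clifford forces $\deg(L)=0$, whence $L=\mathcal{O}_C$; by the symmetry $L\leftrightarrow M$ of the argument, $h^0(M)=1$ forces $M=\mathcal{O}_C$, i.e.\ $L=\omega_{C/k}$. The delicate points I would still need to pin down are the validity of the extremal structure theorem in positive characteristic, the exclusion of the residual degree-$1$ (rational) degeneration using $p_a(C)\ge 2$, and the verification that the resulting degree-$2$ map is genuinely flat onto $\mathbb{P}^1_k$ so that it matches the definition of hyperelliptic adopted above; on a singular curve all of this must be carried out on the dense smooth locus and with generalized divisors rather than with honest effective divisors everywhere.
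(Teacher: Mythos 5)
Your part (2) is exactly the paper's argument: Riemann--Roch on the integral curve plus the hypothesis $\deg(L)\le 2p_a(C)-2$. For part (1) the paper gives no proof at all---it simply quotes the generalized Clifford theorem for integral Gorenstein curves from \cite{Li2}---so your multiplication-map argument is a genuinely different and more self-contained route. Its skeleton is sound: Gorenstein-ness gives Serre duality, so $M=\omega_{C/k}\otimes L^{\vee}$ is effective; the cup product sends $H^0(L)\cdot H^0(M)$ into $H^0(\omega_{C/k})$; and the estimate $\dim_k(VW)\ge \dim_k V+\dim_k W-1$ then yields Clifford by the elimination you describe. For that estimate you do not even need to invoke a linear Cauchy--Davenport theorem with its separability caveats: pick a smooth closed point of $C$, whose local ring is a DVR with residue field $k$, and use the triangular-basis trick---the sets of valuations $v(V\setminus 0)$ and $v(W\setminus 0)$ have cardinalities $\dim V$ and $\dim W$, their sumset is realized inside $v(VW\setminus 0)$, and a sumset of integer sets has size at least $|A|+|B|-1$. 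This is characteristic-free and is the linear-algebra shadow of the classical proof via the addition map $|D|\times|K_C-D|\to |K_C|$ on generalized divisors, which is what the cited proof runs on.

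The one genuine gap is the equality clause for non-hyperelliptic $C$. Your reduction to the extremal case is correct (Clifford equality does force $\dim(VW)=\dim V+\dim W-1$ with $VW\subseteq H^0(\omega_{C/k})$ of full dimension, and the endgame $h^0(L)=1\Rightarrow L=\mathcal O_C$, $h^0(M)=1\Rightarrow L=\omega_{C/k}$ goes through), but the step ``extremal $\Rightarrow$ $V$ and $W$ are spanned by consecutive powers of a single $t$'' is a Vosper-type structure theorem that you have not proved and that is not obviously available for a transcendental extension $K(C)/k$ in positive characteristic; the published linear Vosper theorems are for algebraic separable extensions. Since the whole point of this paper is to avoid tools that break in characteristic $p$, you cannot wave at it. Either cite \cite{Li2} (or Hartshorne's work on generalized divisors on Gorenstein curves) for the equality case, or replace the structure theorem by the classical general-position argument producing a degree-$2$ pencil when $h^0(L),h^0(M)\ge 2$, carried out with generalized divisors on the singular curve. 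The flatness worry at the end is harmless: a finite dominant morphism from an integral curve to $\mathbb P^1_k$ is automatically flat, so a degree-$2$ pencil does exhibit $C$ as hyperelliptic in the sense the paper uses.
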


\begin{proof}
If $L$ is special, then the theorem is just the generalized version of Clifford's theorem in \cite{Li2}. If $h^1(L)=0$, by the Riemann-Roch theorem,
$$
h^0(L) = \deg(L) - p_a(C) + 1 \le \frac 12 \deg(L).
$$
\end{proof}

We also need the following lemma.
\begin{lemma} \label{hyper}
Let $L$ be a special line bundle on a hyperelliptic curve $C$ over $k$ such that $|L|$ is base-point-free, then $\deg(L)$
is even.
\end{lemma}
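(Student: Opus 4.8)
The plan is to show that the base-point-free system $|L|$ is composed with the hyperelliptic pencil, i.e.\ that the morphism it defines factors through the degree-two map $\pi\colon C\to\mathbb P^1_k$; once this is done, $L\cong\pi^*\mathcal O_{\mathbb P^1}(m)$ for some $m$ and hence $\deg(L)=2m$ is even. First I would dispose of the trivial case $h^0(L)=1$: a base-point-free line bundle with a single section is nowhere vanishing, so $L\cong\mathcal O_C$ and $\deg(L)=0$. Thus I may assume $r:=h^0(L)-1\ge1$ and consider the morphism $\phi_L\colon C\to\mathbb P^r$ with $L=\phi_L^*\mathcal O(1)$. I would record the structure of hyperelliptic Gorenstein curves from \cite{Li2}: writing $D:=\pi^*\mathcal O_{\mathbb P^1}(1)$ for the hyperelliptic class ($\deg D=2$), one has $\omega_{C/k}\cong\pi^*\mathcal O_{\mathbb P^1}(p_a(C)-1)$, so that $|\omega_{C/k}|$ is base-point-free and the morphism it defines is $\nu\circ\pi$, where $\nu\colon\mathbb P^1\to\mathbb P^{p_a(C)-1}$ is the rational normal curve of degree $p_a(C)-1$.

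The heart of the argument is to prove that $\phi_L$ contracts the general fibre of $\pi$. Fix a general fibre, so that $p+q\sim D$. Base-point-freeness gives $h^0(L)-h^0(L-D)\ge1$, while Riemann--Roch together with $\omega_{C/k}\cong(p_a(C)-1)D$ rewrites
$$ h^0(L)-h^0(L-D)=2-\bigl(h^0(\omega_{C/k}-L+D)-h^0(\omega_{C/k}-L)\bigr), $$
and since $\omega_{C/k}-L$ is effective (as $L$ is special) this difference is at most $2$. The goal is to exclude the value $2$: if $h^0(L)-h^0(L-D)=1$, then $\phi_L(p)=\phi_L(q)$, so $\phi_L$ factors as $\psi\circ\pi$ for some $\psi\colon\mathbb P^1\to\mathbb P^r$, giving $L\cong\pi^*(\psi^*\mathcal O(1))=\pi^*\mathcal O_{\mathbb P^1}(m)$ and $\deg(L)=2m$. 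I expect to organize this as a descending induction: having shown $h^0(L-D)=h^0(L)-1$ and that $|L-D|$ is again base-point-free and special, I would replace $L$ by $L-D$ and repeat, terminating at the case $h^0=1$ treated above, which forces $L\sim mD$.

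The main obstacle is precisely excluding the value $2$, that is, showing that a special, base-point-free system cannot separate the two points of a general fibre of $\pi$. I would attack this through geometric Riemann--Roch applied to a general member of $|L|$, using that the canonical image of $C$ is the rational normal curve $\nu(\mathbb P^1)$: separating a fibre would force the image of the member to impose independent conditions incompatible with the effectivity of the residual $\omega_{C/k}-L$, i.e.\ with specialty. The subtle point is that the canonical morphism is two-to-one onto $\nu(\mathbb P^1)$, so the naive linear-span count on the rational normal curve must be carried out downstairs on $\mathbb P^1$ via $\pi_*$ rather than upstairs on $C$; this is exactly where Clifford's theorem (Theorem \ref{h0bound}) and the hyperelliptic structure must be combined. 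Finally I would be careful about the characteristic: since in characteristic $2$ the cover $\pi$ may be inseparable and carry no involution, the whole argument should be phrased in terms of $\pi$, $D$ and line bundles rather than a hyperelliptic involution, and the facts that $|\omega_{C/k}|$ is base-point-free and that $\omega_{C/k}\cong\pi^*\mathcal O_{\mathbb P^1}(p_a(C)-1)$ should be quoted from \cite{Li2} in the Gorenstein generality needed here.
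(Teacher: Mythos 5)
Your reduction is correct as far as it goes: if you can show that a special base-point-free $|L|$ with $h^0(L)\ge 2$ satisfies $h^0(L)-h^0(L-D)=1$ for the hyperelliptic class $D=\pi^*\mathcal O_{\mathbb P^1}(1)$, then $\phi_L$ identifies the two points of a general fibre of $\pi$, factors through $\pi$ (extend the induced rational map on the normal curve $\mathbb P^1$ and use that $C$ is integral), and $\deg(L)=2\deg\psi^*\mathcal O(1)$ is even. The problem is that you never prove this key step. You correctly identify ``excluding the value $2$'' as the main obstacle, but what you offer for it is a programme --- geometric Riemann--Roch on the canonical image, a linear-span count ``carried out downstairs via $\pi_*$'', to be ``combined'' with Clifford --- rather than an argument. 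That linear-span count is essentially equivalent to the statement being proved, and the points you yourself flag as delicate (the two-to-one canonical morphism, possible inseparability of $\pi$ in characteristic $2$, non-reduced general members of $|L|$ when $\phi_L$ is inseparable, the singular points of the Gorenstein curve) are exactly where such a count can fail; deferring them to a citation of \cite{Li2} ``in the generality needed here'' does not close the gap. As written, this is a plan with the decisive step missing.

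For comparison, the paper's proof sidesteps all of this: it takes a general pencil inside $|L|$ together with the hyperelliptic map to form $\phi=(\phi_1,\phi_2)\colon C\to\mathbb P^1\times\mathbb P^1$; since $\deg\phi$ divides $\deg\phi_1=2$, either $\deg\phi=2$, in which case $\deg(L)=\deg\phi_2$ is twice the degree of the second projection restricted to $\phi(C)$ and hence even, or $\phi$ is birational onto a curve $C'\in|d_LF_1+2F_2|$, in which case the sequence $0\to\mathcal O(-C'+F_2)\to\mathcal O(F_2)\to\mathcal O_{C'}(F_2)\to 0$ together with Serre duality gives $h^1(\mathcal O_{C'}(F_2))=0$, hence $h^1(L)=0$ via the surjection $H^1(\mathcal O_{C'}(F_2))\twoheadrightarrow H^1(\phi_*L)$, contradicting specialty. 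If you prefer to stay inside your own framework, the cleanest way to close your gap is not geometric Riemann--Roch but the base-point-free pencil trick applied to the pencil $|D|$ and $M=\omega_{C/k}-L$: the Koszul sequence gives $h^0(M+D)\ge 2h^0(M)-h^0(M-D)\ge h^0(M)+1$ once $h^0(M)\ge 1$ (if $h^0(M-D)=h^0(M)$ then every section of $M$ vanishes on every fibre of $\pi$, forcing $h^0(M)=0$), and your Riemann--Roch identity then yields $h^0(L)-h^0(L-D)\le 1$ with no smoothness or separability hypotheses.
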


\begin{proof}
Denote $d_L=\deg(L)$. Since $L$ is base-point-free, we can choose $D_1, D_2 \in |L|$ and define a morphism
$$
\phi = (\phi_1, \phi_2) : C \to \mathbb P^1 \times \mathbb P^1.
$$
Here $\phi_1$ is the degree two map from $C$ to $\mathbb P^1$ by the definition of hyperelliptic curves, and
$\phi_2$ is the degree $d_L$ map $C$ to $\mathbb P^1$ induced by $D_1$ and $D_2$.
It is easy to see that either $\phi$ is birational from $C$ to $\phi(C)$ or $\deg \phi = 2$.

If $\deg \phi = 2$, then we are done. Furthermore, we claim that $\phi$ can not be birational to its image.

If $\phi$ is birational, denote $C'=\phi(C)$. By the definition of $\phi$, we can write
$C' \in |d_L F_1 + 2F_2|$, where $F_1$ and $F_2$ are rules on $\mathbb P^1 \times \mathbb P^1$.
Note that $L=\phi^* \mathcal O_{C'}(F_2)$. We have the following short exact sequence on $C'$:
$$
0 \longrightarrow \mathcal O_{C'} (F_2) \longrightarrow \phi_* L \longrightarrow \mathcal E \longrightarrow 0,
$$
where $\mathcal E$ is a skyscraper sheaf. Hence we have a surjection
$$
H^1(\mathcal O_{C'} (F_2)) \twoheadrightarrow H^1(\phi_* L).
$$
Since $L$ is special and $\phi$ is finite, we get $h^1(\mathcal O_{C'} (F_2))  \ge h^1(\phi_* L) = h^1(L) > 0$.

On the other hand, we have another exact sequence
$$
0 \longrightarrow \mathcal O_{\mathbb P^1 \times \mathbb P^1} (-C' + F_2) \longrightarrow \mathcal O_{\mathbb P^1 \times \mathbb P^1} (F_2)
\longrightarrow O_{C'} (F_2) \longrightarrow 0,
$$
which gives
$$
H^1(\mathcal O_{\mathbb P^1 \times \mathbb P^1} (F_2)) \longrightarrow H^1(\mathcal O_{C'} (F_2))
\longrightarrow H^2(O_{\mathbb P^1 \times \mathbb P^1} (-C'+ F_2)).
$$
Now by Serre duality,
$$
h^2(O_{\mathbb P^1 \times \mathbb P^1} (-C' + F_2)) = h^0(O_{\mathbb P^1 \times \mathbb P^1}((d_L-2)F_1-F_2)=0.
$$
Moreover, using the Riemann-Roch formula on $\mathbb P^1 \times \mathbb P^1$, we get
$$
h^1(\mathcal O_{\mathbb P^1 \times \mathbb P^1} (F_2))=0,
$$
which forces $h^1(\mathcal O_{C'} (F_2)) = 0$.
\end{proof}

We will divide the proof of Theorem \ref{relnoether} into two parts.

\subsection{Hyperelliptic case}

We first prove Theorem \ref{relnoether} when $F$ is hyperelliptic.

By Proposition \ref{algcase1} for $j=n$, we get
\begin{eqnarray*}
h^0(L_0) & \le & h^0(L'_n) + \sum_{i=0}^n a_{i} r_{i} = \sum_{i=0}^n a_{i} r_{i}, \\
L^2_0 & \ge & 2a_0d_0 + \sum_{i=1}^n a_{i}(d_{i-1}+d_i) - 2d_0.
\end{eqnarray*}

We need to deal with two different cases:
\begin{itemize}
\item Each $L_i|_F$ is special;
\item There exists a $k>0$ such that $L_0|_F, \cdots, L_{k-1}|_F$ are not special and $L_{k}|_F, \cdots, L_{n}|_F$ are special.
\end{itemize}

First, we assume that each $L_i|_F$ is special. If $d_0$ is even, applying Clifford's theorem, we have
$$
r_i \le \frac 12 d_i + 1.
$$
On the other hand, since $F$ is sufficiently general, and the linear system
$|L_i|$ has no fixed part for $i=1, \cdots, n$, the line bundle $L_i|_F$ is base-point-free
by construction. By Lemma \ref{hyper}, $d_i$ is even.
Thus
$$
d_{i-1}-d_i \ge 2
$$
for $i=1, \cdots, n$.
By Lemma \ref{algsumai}, we have
\begin{eqnarray*}
h^0(L_0) - \frac 14 L_0^2 & \le & \sum_{i=0}^n a_i - \frac 14 \sum_{i=1}^n(d_{i-1}-d_i) a_i + \frac 12 d_0  \\
& \le & a_0 + \frac 12 \sum_{i=1}^n a_i + \frac 12 d_0 \\
& \le & \frac {1}{2d_0} L_0^2 + \frac 12 d_0 + 1.
\end{eqnarray*}
If $d_0$ is odd, the only differences are
$$
r_0 \le \frac 12 d_0 + \frac 12,  \quad d_0-d_1 \ge 1.
$$
Therefore by Lemma \ref{algsumai} again,
\begin{eqnarray*}
h^0(L_0) - \frac 14 L_0^2 & \le & \sum_{i=0}^n a_i - \frac 14 \sum_{i=1}^n(d_{i-1}-d_i) a_i + \frac 12 d_0 - \frac 12 a_0 \\
& \le &  \frac 12 a_0 + \frac 34 \sum_{i=1}^n a_i + \frac 12 d_0 \\
& \le & \frac {3}{4d_0} L_0^2 + \frac 12 d_0 + \frac 12.
\end{eqnarray*}

Now let us deal with the other case. From Theorem \ref{h0bound}, we know
$$
r_i \le \frac 12 d_i, \qquad i=0, \cdots, k-1,
$$
and
$$
r_i \le \frac 12 d_i + 1, \qquad i=k, \cdots, n.
$$
Similarly, we have
$$
d_{i-1}-d_i \ge 1, \qquad i=0, \cdots, k,
$$
and
$$
d_{i-1}-d_i \ge 2, \qquad i=k+1, \cdots, n.
$$
Thus we have
\begin{eqnarray*}
h^0(L_0) - \frac 14 L_0^2
 &\le & \sum_{i=k+1}^n a_i -\frac 14 \sum_{i=k+1}^n(d_{i-1}-d_i) a_i + \frac 12 d_0 \\
 &&+ \left(r_k-\frac 12 d_k - \frac 14(d_{k-1}-d_k) \right) a_k \\
 & \le & \frac 12 \sum_{i=k+1}^n a_i + \frac 12 d_0 + \left(r_k-\frac 12 d_k - \frac 14(d_{k-1}-d_k) \right) a_k.
\end{eqnarray*}
If $d_{k-1}-d_k \ge 2$, we have
$$
h^0(L_0) - \frac 14 L_0^2 \le \frac 12 \sum_{i=k}^n a_i + \frac 12 d_0.
$$
If $d_{k-1}-d_k=1$, we know that $r_k \le r_{k-1}-1$. So
$$
r_k \le r_{k-1}-1 \le \frac 12 d_{k-1} - 1 = \frac 12 d_k + \frac 12.
$$
Hence we still have
$$
h^0(L_0) - \frac 14 L_0^2 < \frac 12 \sum_{i=k}^n a_i + \frac 12 d_0.
$$
By Lemma \ref{algsumai} again,
\begin{eqnarray*}
h^0(L_0) & \le & (\frac 14+ \frac{1}{2d_0}) L_0^2 + \frac 12 d_0 + 1 - a_0 - \frac 12 \sum_{i=1}^{k-1} a_i \\
& \le & (\frac 14+ \frac{1}{2d_0}) L_0^2 + \frac 12 d_0.
\end{eqnarray*}

\subsection{Non-hyperelliptic case}
In this case, for $i=1, \cdots, n-1$, we have the following stronger Clifford's theorem:
$$
r_i \le \frac12 d_i + \frac 12.
$$
For $i=0$ or $i=n$, it also holds if $L_i|_F$ is neither trivial nor $\omega_{F/k}$.

First, we assume $L_n|_F$ is not trivial. Using the strong bound and Lemma \ref{algsumai}, we get
\begin{eqnarray*}
h^0(L_0)-\frac 14 L^2_0 & \le & a_0 + \frac 12 \sum_{i=1}^n a_i - \frac 14 \sum_{i=1}^n a_i(d_{i-1}-d_i)  + \frac 12 d_0 \\
& \le & a_0 + \frac 12 \sum_{i=1}^n a_i + \frac 12 d_0 \\
& \le & \frac {1}{2d_0} L^2_0 + \frac 12 d_0 + 1.
\end{eqnarray*}
If $L_n|_F$ is trivial, then $r_n=1$ and $d_n=0$. Note that $d_{n-1}-d_n > 2$ since $F$ is not hyperelliptic. It gives
\begin{eqnarray*}
h^0(L_0)-\frac 14 L^2_0 & \le & a_0 + \frac 12 \sum_{i=1}^{n-1} a_i + a_n - \frac 14 \sum_{i=1}^{n} a_i (d_{i-1}-d_i) + \frac 12 d_0 \\
& < & a_0 + \frac 12 \sum_{i=1}^{n} a_i + \frac 12 d_0.
\end{eqnarray*}
By Lemma \ref{algsumai} again, it yields
$$
h^0(L_0) \le (\frac 14 + \frac {1}{2d_0}) L^2_0 + \frac 12 d_0 + 1.
$$
It ends the proof of Theorem \ref{relnoether}.

\subsection{Proof of Theorem \ref{relnoetheromega}}

Now, Theorem \ref{relnoetheromega} becomes straightforward.
Let $f: X \rightarrow Y$ be a relatively minimal fibration. Then the relative dualizing sheaf $\omega_f$ is nef and $\omega_f F= 2g-2$ is even.
Just applying Theorem \ref{relnoether}, we can directly get Theorem \ref{relnoetheromega}.


\section{Slope inequality}

In this section, we prove the slope inequality.
By the Riemann-Roch theorem, Theorem \ref{relnoetheromega} implies the slope inequality with an ``error term''.
To get rid of the ``error term'', we consider certain base change of $Y$ and take a limit.
The argument is straightforward if $g(Y) \le 1$.
In general, the covering trick only works in positive characteristics.
To get the result in characteristic $0$, we use a reduction argument.

\subsection{Slope inequality for $g(Y) \le 1$}

In this section, we apply Theorem \ref{relnoetheromega} to give a new proof of the slope inequality for
fibered surfaces over $\mathbb P^1$ and elliptic curves.
We first give the following lemma.

\begin{lemma}\label{h0todeg}
Let $f: X \rightarrow Y$ be a relatively minimal  surface fibration of genus $g$. Then
$$
\deg(f_* \omega_f) \le \frac {g}{4g-4} \omega^2_f  + gb.
$$
Here $b=g(Y)$.
\end{lemma}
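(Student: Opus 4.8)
The plan is to convert the bound on $h^0(\omega_f)$ supplied by Theorem \ref{relnoetheromega} into a bound on $\deg(f_*\omega_f)$ by applying the Riemann--Roch theorem to the vector bundle $f_*\omega_f$ on the base curve $Y$. The underlying observation is that $h^0(\omega_f)$ and $\deg(f_*\omega_f)$ differ, up to the Euler-characteristic correction $g(1-b)$, by the nonnegative quantity $h^1(Y, f_*\omega_f)$, so the upper bound of Theorem \ref{relnoetheromega} passes directly to a statement about the degree.

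First I would record that $E := f_*\omega_f$ is a locally free sheaf of rank $g$ on $Y$, and that by definition of the direct image
$$
h^0(X, \omega_f) = h^0(Y, E).
$$
Local freeness holds because $Y$ is a smooth curve and $E$ is reflexive (for instance, $E$ is the dual of $R^1 f_* \mathcal O_X$ by relative duality, and reflexive coherent sheaves on a regular curve are locally free), while the generic rank equals $h^0(F, \omega_F) = p_a(F) = g$ for the general fiber $F$.

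Next I would apply Riemann--Roch on the genus-$b$ curve $Y$ to the rank-$g$ bundle $E$, giving
$$
\chi(Y, E) = \deg E + g(1-b).
$$
Since $h^1(Y, E) \ge 0$, this yields $h^0(Y, E) \ge \deg E + g(1-b)$, hence $\deg(f_*\omega_f) + g(1-b) \le h^0(\omega_f)$. Combining with the bound $h^0(\omega_f) \le \frac{g}{4g-4}\omega_f^2 + g$ of Theorem \ref{relnoetheromega} and rearranging gives
$$
\deg(f_*\omega_f) \le \frac{g}{4g-4}\omega_f^2 + g - g(1-b) = \frac{g}{4g-4}\omega_f^2 + gb,
$$
which is the asserted inequality.

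The argument is essentially bookkeeping once the structural facts about $f_*\omega_f$ are in place, so the step I would check most carefully is the claim that $f_*\omega_f$ is locally free of rank exactly $g$ in arbitrary characteristic, where the general fiber need not be smooth. Here one uses that, $X$ and $Y$ being smooth, $f$ is a flat Gorenstein morphism and $\omega_f$ is a line bundle restricting to the dualizing sheaf $\omega_F$ on each (Gorenstein) fiber; then $h^0(F, \omega_F) = p_a(F) = g$ computes the rank, and reflexivity on the smooth curve $Y$ upgrades $E$ to a vector bundle. Cohomology and base change then confirm the rank is constant equal to $g$, after which no positive-characteristic subtleties intervene.
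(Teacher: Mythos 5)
Your proof is correct and follows the same route as the paper: apply Theorem \ref{relnoetheromega} to bound $h^0(\omega_f)$, then use Riemann--Roch on $Y$ for the rank-$g$ bundle $f_*\omega_f$ together with $h^1(Y,f_*\omega_f)\ge 0$ to pass to the degree. The extra justification you give for local freeness and the rank of $f_*\omega_f$ is left implicit in the paper but is accurate.
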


\begin{proof}
By Theorem \ref{relnoetheromega}, we have
$$
h^0(\omega_f) \le \frac{g}{4g-4} \omega^2_f + g.
$$
On the other hand, using the Riemann-Roch theorem on $Y$,
$$
h^0(\omega_f)=h^0(f_*\omega_f) \ge \deg(f_*\omega_f)+g(1-b).
$$
It follows that
$$
\deg(f_*\omega_f) \le \frac{g}{4g-4} \omega^2_f + gb.
$$
\end{proof}

\begin{theorem} \label{slopeg1}
Let $f: X \rightarrow Y$ be a relatively minimal  surface fibration of genus $g$. Assume that
$g(Y) \le 1$. Then
$$
\omega^2_f \ge \frac {4g-4}{g} \deg(f_* \omega_f).
$$
\end{theorem}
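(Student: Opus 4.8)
The plan is to remove the error term $gb$ in Lemma \ref{h0todeg} by a base-change-and-limit argument, exploiting that both sides of the slope inequality scale linearly under finite base change while the error term does not. When $b=g(Y)=0$ there is nothing to prove: Lemma \ref{h0todeg} already reads $\deg(f_*\omega_f)\le \frac{g}{4g-4}\omega^2_f$, which is exactly the desired inequality. So the content lies entirely in the elliptic case $b=1$.

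For $b=1$, fix an elliptic structure on $Y$ and, for each integer $n$ with $\charr k\nmid n$, consider the multiplication map $[n]\colon Y\to Y$, which is \'etale of degree $n^2$. I would form the fiber product $X'=X\times_{Y,[n]}Y$ with its two projections $g\colon X'\to X$ and $f'\colon X'\to Y$. Since $[n]$ is \'etale, $g$ is \'etale of degree $n^2$, $X'$ is again a smooth projective surface, and $f'$ is a genus-$g$ fibration whose general fiber is identified with that of $f$; in particular $\omega_{f'}=g^*\omega_f$ is nef, so $f'$ is again relatively minimal (a $(-1)$-curve in a fiber of $f'$ would meet $\omega_{f'}$ negatively). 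The essential bookkeeping is on intersection numbers and degrees: because $g$ is finite of degree $n^2$ one has $\omega^2_{f'}=n^2\omega^2_f$, and by flat base change $f'_*\omega_{f'}=[n]^*f_*\omega_f$, whence $\deg(f'_*\omega_{f'})=n^2\deg(f_*\omega_f)$, while $g(Y)=1$ is unchanged.

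Applying Lemma \ref{h0todeg} to $f'$ then gives
$$
n^2\deg(f_*\omega_f)=\deg(f'_*\omega_{f'})\le \frac{g}{4g-4}\,\omega^2_{f'}+g=\frac{g}{4g-4}\,n^2\omega^2_f+g.
$$
Dividing by $n^2$ yields $\deg(f_*\omega_f)\le \frac{g}{4g-4}\omega^2_f+\frac{g}{n^2}$, and letting $n\to\infty$ through integers prime to $\charr k$ (for instance $n=\ell^k$ with a fixed prime $\ell\neq\charr k$) kills the error term and produces the slope inequality.

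The main obstacle is arranging base changes whose degree tends to infinity while the genus of the base stays bounded; this is precisely why the method is confined to $g(Y)\le 1$. For $g(Y)\ge 2$ every finite cover $Y'\to Y$ has genus growing linearly in the degree by Riemann--Hurwitz, so the residual $gb'/m$ does not tend to zero, and a different device (reduction modulo a prime, as flagged in the introduction) is required. The remaining technical points---separability of $[n]$ in characteristic $p$ (handled by $\charr k\nmid n$), the identification $\omega_{f'}=g^*\omega_f$ for the \'etale cover, flat base change for $f'_*\omega_{f'}$, and the persistence of relative minimality---are routine but must be verified to make the scaling identities legitimate.
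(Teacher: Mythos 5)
Your proposal is correct and follows essentially the same route as the paper: the $g(Y)=0$ case is immediate from Lemma \ref{h0todeg}, and for the elliptic base one pulls back along the \'etale multiplication-by-$n$ map, applies Lemma \ref{h0todeg} to the resulting fibration, uses the scaling $\deg(f'_*\omega_{f'})=n^2\deg(f_*\omega_f)$ and $\omega^2_{f'}=n^2\omega^2_f$, and lets $n\to\infty$. The extra checks you flag (preservation of relative minimality, flat base change, separability of $[n]$) are sound and only make explicit what the paper leaves implicit.
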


\begin{proof}
If $g(Y)=0$, then the result is just Lemma \ref{h0todeg}.

Now, suppose that $Y$ is an elliptic curve over $k$ and that $\mu: Y \to Y$ is the multiplication by $n$ such that
$n$ and $\charr k$ are coprime with each other.
Denote $X'=X \times_{\mu} Y$.
We get a new fibration $f' : X' \rightarrow Y$ which is just the pull-back of $f$ by $\mu$. Applying
Lemma \ref{h0todeg} to $f'$, it follows that
$$
\deg(f'_*\omega_{f'}) \le \frac{g}{4g-4} \omega^2_{f'} + g.
$$
On the other hand, since the base change is \'etale, we have the following facts:
$$
\deg(f'_*\omega_{f'})= n^2 \deg(f_*\omega_f), \quad \omega^2_{f'}=n^2 \omega^2_{f},
$$
which gives us
$$
n^2 \deg(f_*\omega_f) \le \frac{n^2 g}{4g-4} \omega^2_f + g,
$$
i.e.,
$$
\deg(f_*\omega_f) \le \frac{g}{4g-4} \omega^2_f + \frac g {n^2}.
$$
We can prove our result by letting $n \rightarrow \infty$.
\end{proof}

\begin{remark}
In the case $b=g(Y) \ge 2$, if we directly use an \'etale base change $\pi : Y' \rightarrow Y$, then
$g(Y')$ also increases. Therefore, we can not prove the general slope inequality using the above argument.
However, we still want to use the base change
trick and control $g(Y')$ at the same time, which motivates us to consider the reduction mod $\wp$ method.
\end{remark}

\subsection{Slope inequality in positive characteristic}
We first prove the slope inequality when $\charr k=p>0$. This is also crucial for us to prove
the slope inequality for $\charr k=0$. Actually, the proof is quite similar to the proof of Theorem \ref{slopeg1}.

\begin{theorem} \label{slopeposchar}
Let $f: X \to Y$ be a semistable fibration of genus $g$ over $k$ of positive characteristic, and $g(Y) \ge 2$. Then the slope inequality holds.
\end{theorem}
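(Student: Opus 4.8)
The plan is to combine Lemma~\ref{h0todeg} with a base change argument in the spirit of Theorem~\ref{slopeg1}, replacing the \'etale covers used there by purely inseparable Frobenius covers. Lemma~\ref{h0todeg} already yields the slope inequality up to an additive error:
\[
\deg(f_*\omega_f) \le \frac{g}{4g-4}\,\omega_f^2 + g\,b, \qquad b=g(Y).
\]
To remove the term $gb$ I would pass to a tower of finite covers $\pi_n\colon Y_n\to Y$ of degree $m_n\to\infty$ and divide the corresponding inequality for the pulled-back fibration by $m_n$; the error then becomes $g\cdot g(Y_n)/m_n$. For $g(Y)\le 1$ this works with \'etale covers (Theorem~\ref{slopeg1}), but for $g(Y)\ge 2$ any separable cover forces $g(Y_n)$ to grow, so the error need not vanish. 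The characteristic-$p$ remedy is the Frobenius: it furnishes purely inseparable covers $\pi_n$ of degree $m_n=p^n$ whose source $Y_n$ has the \emph{same} genus $b$ as $Y$, so that $g\cdot g(Y_n)/p^n=gb/p^n\to 0$.

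Concretely, I would take $\pi_n\colon Y_n\to Y$ to be the $n$-th iterated Frobenius cover, so $g(Y_n)=b$, and let $f_n\colon X_n\to Y_n$ be the relatively minimal model of the base change of $f$ along $\pi_n$. The two inputs I need are the multiplicativities
\[
\deg\big((f_n)_*\omega_{f_n}\big)=p^n\deg(f_*\omega_f),\qquad \omega_{f_n}^2=p^n\,\omega_f^2.
\]
Granting these and applying Lemma~\ref{h0todeg} to the relatively minimal fibration $f_n$ over the genus-$b$ curve $Y_n$ gives
\[
p^n\deg(f_*\omega_f)\le \frac{g}{4g-4}\,p^n\,\omega_f^2 + g\,b;
\]
dividing by $p^n$ and letting $n\to\infty$ yields $\deg(f_*\omega_f)\le \frac{g}{4g-4}\,\omega_f^2$, which is the slope inequality.

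The hard part, and the reason semistability is assumed, is proving these two identities under a \emph{purely inseparable} base change, where the separable base-change formalism and the characteristic-zero stability arguments both fail. For the direct image I would invoke cohomology and base change: since a semistable fiber $F$ has $h^0(\omega_F)=g$ and $h^1(\omega_F)=1$ constant, $f_*\omega_f$ is locally free and commutes with arbitrary base change, giving $\pi_n^*(f_*\omega_f)\cong(f_n)_*\omega_{f_n}$ and hence the first identity. For $\omega_{f_n}^2$ the key is that the Frobenius pullback of a semistable fibration is again semistable: \'etale-locally a node $xy=t^e$ pulls back to $xy=t^{p^n e}$, whose normalization is resolved by a chain of $(-2)$-curves, introducing no $(-1)$-curve in the fibers and no discrepancy, so the resolved pullback is already relatively minimal and $\omega_{f_n}^2=p^n\omega_f^2$ exactly. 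I expect this local analysis of the inseparable base change, together with checking that the generic fiber stays a smooth genus-$g$ curve over $k(Y_n)$, to be the technical core of the argument.
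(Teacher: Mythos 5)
Your proposal is essentially the paper's own proof: apply Lemma~\ref{h0todeg} to the (iterated) Frobenius base change of $f$, use the multiplicativities $\deg((f_n)_*\omega_{f_n})=p^n\deg(f_*\omega_f)$ and $\omega_{f_n}^2=p^n\omega_f^2$ together with the invariance of $g(Y)$ under Frobenius, and let $n\to\infty$ to kill the error term $gb/p^n$. The paper simply asserts the two identities for the minimal desingularization of $X\times_{F_Y}Y$, whereas you sketch their justification (cohomology and base change, and the crepant resolution of the nodes $xy=t^{p^ne}$ by chains of $(-2)$-curves), which is a reasonable filling-in of the same argument.
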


\begin{remark}
By a result of Tate \cite{Ta}, the general fiber is smooth under this assumption.
\end{remark}

\begin{proof}
Let $f: X \to Y$ be a semistable  fibration of
genus $g$. By
Lemma \ref{h0todeg} to $f$, it follows that
$$
\deg(f_*\omega_{f}) \le \frac{g}{4g-4} \omega^2_{f} + gb,
$$
where $b=g(Y)$.

Now let $F_Y : Y \to Y$ be the absolute Frobenius morphism of $Y$. Via this base change, we get a new  fibration
$$
f' : X' \to Y,
$$
where $X'$ is the minimal desingularization of the normal surface $X \times_{F_Y} Y$. Thus $f'$ is still
semistable.
Applying Lemma \ref{h0todeg} again to $f'$, it follows that
$$
\deg(f'_*\omega_{f'}) \le \frac{g}{4g-4} \omega^2_{f'} + gb.
$$
Moreover, we
have the following facts:
$$
\deg(f'_*\omega_{f'})= p \deg(f_*\omega_f), \quad \omega^2_{f'}=p \, \omega^2_{f},
$$
We obtain
$$
p\deg(f_*\omega_f) \le \frac{pg}{4g-4} \omega^2_f + gb,
$$
i.e.,
$$
\deg(f_*\omega_f) \le \frac{g}{4g-4} \omega^2_f + \frac {gb}{p}.
$$
We can prove our result by iterating this Frobenius base change.

\end{proof}

Actually, using the same idea, we can get a more general result for line bundles of small degree in positive
characteristics, which is similar to Theorem \ref{slopeposchar}.

\begin{theorem} \label{relspecialslope}
Let $f: X \to Y$ be a  surface fibration of genus $g>0$ over a field $k$ of positive characteristic,
and $L$ be a nef line bundle on $X$. Assume that $2 \le d=\deg(L|_F) \le 2g-2$, where $F$ is a general fiber
of $f$. Then
$$
L^2 \ge \frac{4d}{d+2+\varepsilon} \deg(f_* L).
$$
Here, $\varepsilon=1$ if $F$ is hyperelliptic, $d$ is odd and $L|_F \le K_F$. Otherwise, $\varepsilon=0$.
\end{theorem}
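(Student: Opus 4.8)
The plan is to run the argument of Theorem~\ref{slopeposchar} with $\omega_f$ replaced by the nef line bundle $L$ and the specialized relative Noether inequality replaced by the full Theorem~\ref{relnoether}; note that the hypotheses force $g\ge 2$, so Theorem~\ref{relnoether} applies. First I would convert the relative Noether inequality into a bound on $\deg(f_*L)$ carrying an error term, in exact analogy with Lemma~\ref{h0todeg}. Writing $\tfrac14+\tfrac{2+\varepsilon}{4d}=\tfrac{d+2+\varepsilon}{4d}$, Theorem~\ref{relnoether} reads $h^0(L)\le \tfrac{d+2+\varepsilon}{4d}L^2+\tfrac{d+2-\varepsilon}{2}$. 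Since $H^0(X,L)=H^0(Y,f_*L)$ and $f_*L$ is a vector bundle on $Y$ of rank $r_0=h^0(L|_F)$, the Riemann-Roch theorem on $Y$ gives $h^0(L)\ge \deg(f_*L)+r_0(1-b)$ with $b=g(Y)$. Combining the two displays yields
$$
\deg(f_*L)\le \frac{d+2+\varepsilon}{4d}\,L^2+C,\qquad C:=\frac{d+2-\varepsilon}{2}+r_0(b-1),
$$
which is the desired inequality up to the fixed constant $C$.

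Next I would remove $C$ by iterating the absolute Frobenius base change of $Y$. Let $F_Y\colon Y\to Y$ be the absolute Frobenius (finite of degree $p$), let $\tilde X=X\times_{F_Y}Y$ with projections $p_1\colon\tilde X\to X$ and $\tilde f\colon\tilde X\to Y$, let $\rho\colon X'\to\tilde X$ be the minimal desingularization, and put $f'=\tilde f\circ\rho$, $\pi=p_1\circ\rho$ and $L'=\pi^*L$. Then $f'\colon X'\to Y$ is again a surface fibration of genus $g$ and $L'$ is nef; since $F_Y$ is a homeomorphism that is an isomorphism over the generic point of $Y$, a general fiber of $f'$ is isomorphic to a general fiber of $f$, so $g$, the hyperellipticity of the general fiber, the integer $d=\deg(L'|_{F'})$, the rank $r_0$ and the condition $L'|_{F'}\le K_{F'}$ are all unchanged; in particular $\varepsilon$ and $C$ are preserved. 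As $\pi$ is generically finite of degree $p$, the projection formula gives $(L')^2=p\,L^2$. For the pushforward, flat base change along the Cartesian square identifies $F_Y^*(f_*L)$ with $\tilde f_*\,p_1^*L$; tensoring the natural inclusion $\mathcal O_{\tilde X}\hookrightarrow\rho_*\mathcal O_{X'}$ with $p_1^*L$ and applying $\tilde f_*$ produces an injection $F_Y^*(f_*L)\hookrightarrow f'_*L'$ of vector bundles of equal rank on $Y$, whence $\deg(f'_*L')\ge \deg F_Y^*(f_*L)=p\,\deg(f_*L)$.

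Applying the first-step inequality to $(f',L')$ and inserting the two relations gives $p\,\deg(f_*L)\le \tfrac{d+2+\varepsilon}{4d}\,p\,L^2+C$, that is, $\deg(f_*L)\le \tfrac{d+2+\varepsilon}{4d}L^2+C/p$. Iterating the base change $m$ times replaces $C/p$ by $C/p^m$, and letting $m\to\infty$ yields $\deg(f_*L)\le \tfrac{d+2+\varepsilon}{4d}L^2$, which rearranges to the claimed bound $L^2\ge \tfrac{4d}{d+2+\varepsilon}\deg(f_*L)$.

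The first step is routine; the heart of the matter is the second. The delicate point is that the fiber product $\tilde X$ may be non-normal and must be resolved, which a priori could spoil the comparison of $f'_*L'$ with $F_Y^*(f_*L)$. The key observation is that the resolution only enlarges the pushforward, so the favorable direction $\deg(f'_*L')\ge p\,\deg(f_*L)$ survives and exact multiplicativity of $\deg(f_*L)$ is never needed. The remaining care is bookkeeping: verifying that $f'$ is genuinely a surface fibration and that every fiber-theoretic invariant entering $\varepsilon$ is preserved under the base change, after which the limiting argument is immediate.
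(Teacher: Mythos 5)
Your proposal follows the paper's own proof exactly: Theorem \ref{relnoether} combined with Riemann--Roch on $Y$ yields the bound with the constant $C=\frac{d+2-\varepsilon}{2}+r_0(b-1)$, and the constant is eliminated by iterating the absolute Frobenius base change exactly as in Theorem \ref{slopeposchar}; in fact you supply details the paper only sketches, notably that one only needs the inequality $\deg(f'_*L')\ge p\,\deg(f_*L)$, which survives the desingularization of the (possibly non-normal) fiber product via the injection $F_Y^*(f_*L)\hookrightarrow f'_*L'$. One small slip in your justification: the absolute Frobenius $F_Y$ is \emph{not} an isomorphism over the generic point of $Y$ (it is a degree-$p$ purely inseparable cover there); the correct reason that the general fiber and the invariants $d$, $g$, $r_0$, $\varepsilon$ are preserved is that a general \emph{closed} fiber of $f'$ is the Frobenius twist of the corresponding fiber of $f$ over the perfect field $k$, hence abstractly isomorphic to it.
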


\begin{proof}
The proof is the same as the proof of the slope inequality in positive characteristic. We just sketch here.

First, by Theorem \ref{relnoether}, we have
$$
h^0(L) \le (\frac 14 + \frac {2+\varepsilon}{4d}) L^2 + \frac {d+2-\varepsilon}{2}.
$$
The Riemann-Roch theorem on $Y$ gives us
$$
h^0(L)=h^0(f_* L) \ge \deg (f_* L) + r(1-b),
$$
where $b=g(Y)$ and $r=h^0(L|_F)$. Combine them together and we get
$$
\deg(f_*L) \le (\frac 14 + \frac {2+\varepsilon}{4d}) L^2 + \frac {d+2-\varepsilon}{2} + r(b-1).
$$
Now we apply the Frobenius base change iteration as above. Finally, we eliminate the constant term.
\end{proof}

\subsection{Slope inequality in characteristic zero}
Now we can prove the slope inequality for $\charr k=0$.
We have the following lemma.

\begin{lemma} \label{redmodp}
Let $X, Y, Z$ be integral schemes, and $f: X\to Y$ and $g:Y \to Z$ be proper and flat morphisms of relative dimension one.
Assume that $f$ is a local complete intersection and $g$ is smooth. Then the numbers
$$
\deg {f_{z}}_*(\omega_{X_z/Y_z}) \quad \mbox{and} \quad \omega^2_{X_z/{Y_z}}
$$
are independent of $z\in Z$.
Here $f_z:X_z\to Y_z$ denotes the fiber of $f:X\to Y$ over $z$,  and
$\omega_{X_z/Y_z}$ denotes the relative dualizing sheaf of $X_z$ over $Y_z$.

\end{lemma}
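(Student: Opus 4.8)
The plan is to reduce both quantities to Euler characteristics and intersection numbers of a single fixed sheaf on $X$, read off on the fibers of the proper flat morphism $h=g\circ f:X\to Z$, and then invoke the fact that such numbers are locally constant in proper flat families; since $Z$ is integral, hence connected, ``locally constant'' upgrades to ``constant''. The organizing observation is that, because $f$ is flat, proper and a local complete intersection of relative dimension one, the relative dualizing sheaf $\omega_{X/Y}$ is an invertible sheaf on $X$ whose formation commutes with arbitrary base change on $Y$. In particular, for each $z\in Z$ the Cartesian square $X_z=X\times_Y Y_z$ yields $\omega_{X_z/Y_z}=\omega_{X/Y}|_{X_z}$. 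This is the step that converts ``an invariant of the fibration $f_z:X_z\to Y_z$'' into ``an invariant of the fixed line bundle $\omega_{X/Y}$ on the total space, evaluated on the fiber $X_z$''.

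First I would treat $\omega^2_{X_z/Y_z}$. Writing $L=\omega_{X/Y}$, this number is $(L|_{X_z})^2$. By the asymptotic (Snapper) Riemann--Roch theorem on the projective surface $X_z$ over $k(z)$,
$$\chi\bigl(X_z,\, L^{\otimes n}|_{X_z}\bigr)=\frac{(L|_{X_z})^2}{2}\,n^2+O(n),$$
so $\omega^2_{X_z/Y_z}$ is twice the leading coefficient of this numerical polynomial in $n$. For each fixed $n$, the function $z\mapsto \chi(X_z, L^{\otimes n}|_{X_z})$ is locally constant, because $L^{\otimes n}$ is a line bundle on $X$, hence flat over $Z$, and $X$ is proper over $Z$; thus Euler characteristics are constant along the fibers of $h$. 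Consequently the leading coefficient, and therefore $\omega^2_{X_z/Y_z}$, is locally constant in $z$ and so constant on the connected scheme $Z$.

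Next I would treat $\deg\bigl({f_z}_*\omega_{X_z/Y_z}\bigr)$. The sheaf $E:=f_*\omega_{X/Y}$ is locally free on $Y$ (of rank the constant arithmetic genus of the fibers of $f$), and its formation commutes with the base change $Y_z\hookrightarrow Y$ by cohomology and base change, using the constancy of the fiberwise cohomology of $\omega_{X/Y}$ on the Gorenstein fibers of $f$; hence $E|_{Y_z}={f_z}_*\omega_{X_z/Y_z}$. Therefore
$$\deg\bigl({f_z}_*\omega_{X_z/Y_z}\bigr)=\deg\bigl((\det E)|_{Y_z}\bigr)=\chi\bigl(Y_z,(\det E)|_{Y_z}\bigr)-\chi\bigl(Y_z,\mathcal O_{Y_z}\bigr),$$
the last equality being Riemann--Roch on the smooth curve $Y_z$. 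As before, both Euler characteristics on the right are locally constant in $z$, since $\det E$ and $\mathcal O_Y$ are fixed sheaves on $Y$ and $g:Y\to Z$ is proper and flat. Hence $\deg({f_z}_*\omega_{X_z/Y_z})$ is locally constant, and thus constant on $Z$.

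The steps carrying the real content are the two base-change statements of the first and third paragraphs: that $\omega_{X/Y}$ is a base-change-compatible invertible sheaf (which is precisely where the local complete intersection and relative-dimension-one hypotheses on $f$ enter) and that $f_*\omega_{X/Y}$ is locally free with base-change-compatible formation. Once these are secured, everything else is the standard local constancy of Euler characteristics---and hence of degrees on curves and of self-intersection numbers on surfaces---in proper flat families over a connected base. I therefore expect the main obstacle to be bookkeeping the correct base-change and local-freeness inputs for the dualizing sheaf and its pushforward on the Gorenstein fibers, rather than any substantive computation.
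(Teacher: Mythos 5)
Your argument is correct, but it takes a genuinely different route from the paper. The paper packages both invariants as degrees of fixed line bundles on $Y$: it uses Deligne's determinant of cohomology $M=\lambda_f(\omega_{X/Y})$ for the pushforward degree and the Deligne pairing $N=\langle\omega_{X/Y},\omega_{X/Y}\rangle$ for the self-intersection, and then reads off $\deg(M|_{Y_z})$ and $\deg(N|_{Y_z})$ as locally constant functions of $z$. You instead reduce everything to Euler characteristics of fixed sheaves on $X$ and $Y$, using Snapper's asymptotic Riemann--Roch on the fibers $X_z$ for the self-intersection and Riemann--Roch on the curves $Y_z$ for the degree, together with local constancy of $\chi$ in proper flat families. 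Your route is more elementary in that it avoids the Deligne pairing entirely, and the treatment of $\omega^2_{X_z/Y_z}$ is completely clean. The one place where your approach is softer than the paper's is the degree computation: you need $f_*\omega_{X/Y}$ to be locally free with formation commuting with base change, which requires the fiberwise $h^0(X_y,\omega_{X_y})$ to be constant --- equivalently, by duality, $h^0(X_y,\mathcal O_{X_y})$ constant --- and you assert this without justification. The determinant-of-cohomology approach sidesteps exactly this issue, since $\lambda_f$ commutes with base change unconditionally; the paper then only needs the duality identification $R^1f_{z*}\omega_{X_z/Y_z}\cong\mathcal O_{Y_z}$ (i.e.\ $f_{z*}\mathcal O_{X_z}=\mathcal O_{Y_z}$) to strip off the $H^1$-factor of the determinant, which is essentially the same connectedness/reducedness input you are implicitly using. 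So the gap is not fatal --- it holds in the intended application to fibrations --- but if you want your version to be self-contained you should either justify the constancy of $h^0(\mathcal O_{X_y})$ (e.g.\ via Zariski's lemma for fibers of a fibration) or replace $\deg f_{z*}\omega_{X_z/Y_z}$ by $\deg\det Rf_{z*}\omega_{X_z/Y_z}$ plus a separate argument for the $R^1$-term.
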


\begin{proof}
The invariance of $\deg {f_{z}}_*(\omega_{X_z/Y_z})$ is an interpretation of the determinant line bundle.
Recall that for any line bundle $L$ on $X$, the determinant line bundle
$\lambda_f(L)$ is a line bundle on $Y$ such that, for any $y\in Y$, there is a canonical isomorphism
$$
\lambda_f(L) \simeq \det H^*(X_y,L_y)
=\det H^0(X_y,L_y) \otimes  \det H^1(X_y,L_y)^\vee.
$$
The construction is functorial.

In the setting of the lemma, consider the determinant line bundle
$M=\lambda_f(\omega_{X/Y})$.
Restricted to $Y_z$, we have
$$M|_{Y_z}=(\det f_{z*}\omega_{X_z/Y_z}) \otimes  (\det R^1f_{z*}\omega_{X_z/Y_z} )^\vee
=\det f_{z*}\omega_{X_z/Y_z}. $$
Here we used the canonical isomorphism $R^1f_{z*}\omega_{X_z/Y_z} = \mathcal O_{Y_z}$ following the duality theorem.
Therefore,
we simply have
$$\deg f_{z*}\omega_{X_z/Y_z}
=\deg (M|_{Y_z}). $$
It is independent of $z$.

The invariance of $\omega^2_{X_z/{Y_z}}$ follows from the definition of the Deligne pairing introduced in \cite{De}.
In fact, the Deligne pairing $N=\langle \omega_{X/Y},\omega_{X/Y} \rangle$
is a line bundle on $Y$ such that
$$
\omega^2_{X_z/{Y_z}}=\deg (N|_{Y_z}), \quad\forall z\in Z
$$
It is also independent of $z$.
\end{proof}

The proof does not work for arbitrary line bundles of small degree since we used the duality theorem above.

Go back to our setting. Suppose that $f : X \to Y$ is a semistable fibration over of characteristic $0$. By the Lefschetz principle, one can assume that $k$ is finitely generated over $\mathbb Q$. Let $\mathcal Z$ be an integral scheme of finite type over $\mathbb Z$ with the function field $k$. Shrink $\mathcal Z$ by an open subset and replace it by a finite cover if necessary. We are
able to extend the composition $X \to Y\to \mathrm{Spec}(k)$ to $\mathcal X \to \mathcal Y \to \mathcal Z$ satisfying the conditions of the lemma.

Now choose a nonzero prime $\wp \in \mathcal Z$ such that $f_{\wp} : \mathcal X_{\wp} \to \mathcal Y_{\wp}$ is a semistable fibration over the field $k/\wp$. By the slope inequality in positive
characteristic, we have
$$
\omega^2_{f_{\wp}} \ge \frac {4g-4}{g} \deg({f_\wp}_* \omega_{f_{\wp}}).
$$
By Lemma \ref{redmodp}, the slope inequality holds for $f$ over $k$.



\subsection{Arakelov inequality}
As a direct consequence of the slope inequality, we have the following Arakelov inequality in positive characteristic.
\begin{theorem} [Arakelov inequality] \label{arakelov}
Let $f$ be a non-isotrivial semistable fibration of genus $g \ge 2$ over the field $k$ of positive characteristic. Let $S \subset Y$ be the singular locus over which $f$ degenerates. Then
$$
\deg(f_* \omega_f) < g^2 \deg \Omega_Y(S).
$$
\end{theorem}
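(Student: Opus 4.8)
The plan is to produce the \emph{upper} bound on $\deg(f_*\omega_f)$ from the \emph{non-isotriviality} of $f$, since for a fixed base genus and fixed degeneration locus the slope inequality and the relative Noether formula only bound $\deg(f_*\omega_f)$ against $\omega_f^2$ and the number of nodes, and never against $\deg\Omega_Y(S)$ by itself. Concretely, set $\mathcal E=f_*\omega_f$, which is locally free of rank $g$ because $f$ is semistable (and the general fiber is smooth by the preceding remark). Relative duality gives $R^1f_*\mathcal O_X\cong\mathcal E^\vee$, and the Gauss--Manin/Kodaira--Spencer construction on the logarithmic de Rham bundle along $S$ yields a canonical $\mathcal O_Y$-linear map
\[
\theta:\ \mathcal E\ \longrightarrow\ \mathcal E^\vee\otimes\Omega_Y(S).
\]
The first observation to record is that a non-isotrivial semistable family forces $\deg\Omega_Y(S)\ge 1$ (a non-isotrivial family over $\mathbb P^1$ needs enough singular fibers, etc.), so the right-hand side of the target inequality is positive.

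The second step is a characteristic-$p$ reduction, in the spirit of the Frobenius iteration already used for Theorem \ref{slopeposchar}. In positive characteristic a non-isotrivial family may well have $\theta=0$, since Kodaira--Spencer can be destroyed by Frobenius; this is precisely the failure that the classical Hodge-theoretic proof does not survive. I would argue that when $\theta=0$ the Hodge filtration is horizontal, so the family descends along the relative Frobenius to the Frobenius twist of $Y$, dividing both $\deg(f_*\omega_f)$ and $\deg\Omega_Y(S)$ by $p$ and leaving the desired inequality invariant. As $\deg(f_*\omega_f)$ is a positive integer, it cannot be divided by $p$ indefinitely, so after finitely many descents one reaches a model with $\theta\neq0$, and it suffices to treat that case.

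With $\theta\neq0$ I would run a degree estimate on $Y$. A nonzero map of sheaves on a smooth curve satisfies $\mu_{\min}(\mathcal E)\le\mu_{\max}(\mathcal E^\vee\otimes\Omega_Y(S))=-\mu_{\min}(\mathcal E)+\deg\Omega_Y(S)$, whence $\mu_{\min}(\mathcal E)\le\tfrac12\deg\Omega_Y(S)$. The problem is to upgrade this control of the \emph{minimal} slope into a bound on $\deg\mathcal E=g\,\mu(\mathcal E)$, i.e.\ to bound the instability $\mu_{\max}(\mathcal E)-\mu_{\min}(\mathcal E)$ across the at most $g$ Harder--Narasimhan pieces. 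In characteristic $0$ the semipositivity of $\mathcal E$ confines all slopes to $[0,\deg\Omega_Y(S)]$ and gives the sharp constant $g/2$; lacking semipositivity here, I would instead feed in the slope inequality \ref{slopeposchar} together with the semistable Noether relation $12\deg(f_*\omega_f)=\omega_f^2+\delta$ to bound the accumulated degree crudely over the Harder--Narasimhan filtration, which is what degrades the constant to $g^2$. The strictness of the final inequality is inherited from the strict positivity forced by non-isotriviality.

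The hard part will be exactly this last char-$p$ estimate: because $f_*\omega_f$ is no longer semipositive, controlling $\mu_{\max}(\mathcal E)$, equivalently the instability of the Hodge bundle, is the genuine obstacle, and it is precisely what separates the clean $g/2$ from the weaker $g^2$ that we can afford. I expect the two delicate points to be the Frobenius-descent reduction to $\theta\neq0$ (making the descent of the family and the behavior of $\deg\Omega_Y(S)$ precise) and the crude slope accounting that replaces the missing positivity; everything else should be bookkeeping on the curve $Y$.
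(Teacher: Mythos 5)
There is a genuine gap here, and your route is in any case entirely different from the paper's. The paper's proof is two lines: it quotes Szpiro's inequality $\omega_f^2 < (4g-4)\,g\,\deg\Omega_Y(S)$ and combines it with the slope inequality $\deg(f_*\omega_f)\le \frac{g}{4g-4}\,\omega_f^2$ just established in positive characteristic. The essential point is that \emph{some} external input must bound a numerical invariant of $f$ from above by $\deg\Omega_Y(S)$, and Szpiro's inequality is that input. Your sketch never supplies one: the Kodaira--Spencer map only yields $\mu_{\min}(\mathcal E)\le\frac12\deg\Omega_Y(S)$, and the step where you propose to ``feed in the slope inequality together with the relation $12\deg(f_*\omega_f)=\omega_f^2+\delta$'' cannot close the argument, because those two relations only tie $\deg(f_*\omega_f)$, $\omega_f^2$ and $\delta$ to one another --- none of them is bounded above by $\deg\Omega_Y(S)$ without exactly the kind of estimate you are trying to prove. (Eliminating $\omega_f^2$ between them gives a \emph{lower} bound $\deg(f_*\omega_f)\ge g\delta/(8g+4)$, which points the wrong way.) You correctly flag controlling $\mu_{\max}(\mathcal E)$ as ``the genuine obstacle,'' but the proposal does not resolve it, so the constant $g^2$ is never actually derived.

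The Frobenius-descent reduction is also not sound as stated. If $f$ descends along the relative Frobenius to $f_1\colon X_1\to Y^{(p)}$, then $\deg(f_{1*}\omega_{f_1})=\deg(f_*\omega_f)/p$, but $\deg\Omega_{Y^{(p)}}(S_1)=\deg\Omega_Y(S)$ is unchanged: over the algebraically closed field $k$ the twist $Y^{(p)}$ is abstractly isomorphic to $Y$ and the degeneration locus has the same cardinality. So the target inequality is \emph{not} invariant under the descent; proving it for $f_1$ only yields $\deg(f_*\omega_f)< p\,g^2\deg\Omega_Y(S)$, losing a factor of $p$ at each step, so the reduction to $\theta\neq 0$ does not suffice. (The descent itself, from vanishing of Kodaira--Spencer alone, would also need justification.) If you want to salvage the argument, the honest fix is to import an upper bound on $\omega_f^2$, or equivalently on $\delta$, in terms of $\deg\Omega_Y(S)$ --- which is precisely Szpiro's inequality, at which point the Higgs-field machinery becomes unnecessary and the paper's two-line deduction takes over.
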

\begin{proof}
Let $f$ be a non-isotrivial semistable fibration of genus $g \ge 2$ over $k$. Let $S \subset Y$ be the singular locus over which $f$ degenerates.
We have the following Szpiro's inequality (cf. \cite[Prop. 4.2]{Sz}):
$$
\omega^2_f < (4g-4) g \deg \Omega_{Y}(S).
$$
Therefore,
$$
\deg(f_* \omega_f) \le \frac{g}{4g-4} \omega^2_f < g^2 \deg \Omega_{Y}(S).
$$
\end{proof}

\section{Irregular surfaces in positive characteristic}
In this section, we always assume that $X$ is a smooth minimal surface of general type
over a field $k$ of positive characteristic.

\subsection{Proof of Severi inequality}
We have the following theorem.
\begin{theorem}\label{severi}
Suppose the Albanese map of $X$ is generically finite. Then
$$
\omega^2_{X} \ge 4\chi(\mathcal O_X).
$$
\end{theorem}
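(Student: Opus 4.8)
The strategy is to combine the relative Noether inequality (Theorem \ref{relnoether}) with Pardini's covering trick, so that the slope inequality is never invoked. Since $X$ is a smooth minimal surface of general type whose Albanese map $a: X \to A = \mathrm{Alb}(X)$ is generically finite, we know $\chi(\mathcal{O}_X) > 0$, and the inequality $\omega_X^2 \ge 4\chi(\mathcal{O}_X)$ is equivalent to $\omega_X^2 \ge 4\chi(\mathcal{O}_X)$ being preserved under an asymptotically trivial family of \'etale multiplications. First I would fix an auxiliary numerically nontrivial line bundle, or more canonically use the multiplication-by-$n$ isogeny $[n]: A \to A$ (with $n$ prime to $\charr k$), and form the fiber product $X_n = X \times_A A$ via $a$. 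This produces an \'etale cover $\pi_n : X_n \to X$ of degree $n^{2q}$ (where $q = \dim A$), on which $\omega_{X_n}^2 = n^{2q}\,\omega_X^2$ and $\chi(\mathcal{O}_{X_n}) = n^{2q}\,\chi(\mathcal{O}_X)$, so these quantities scale identically and the trick must instead exploit a \emph{fibration} built on $X_n$.

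The key geometric input is to produce, after the base change, a fibration to which the relative Noether inequality applies. Concretely, I would choose a suitable very ample (or sufficiently positive) line bundle and a pencil on $A$ pulled back along $[n]$, so that $a_n = [n]^* a$ composed with a projection yields a fibration $f_n : \widetilde{X}_n \to \mathbb{P}^1$ (after blowing up base points and passing to a relatively minimal model) whose general fiber has genus $g_n$ growing with $n$. The curve genus $g_n$, the fiber degree $d_n = \deg(L|_F)$, and the self-intersection $L^2$ all scale with $n$ in a controlled way. Applying Theorem \ref{relnoether} to the nef line bundle $L = \pi_n^* \omega_X$ (or $\omega_{f_n}$) on this fibration gives
\begin{equation*}
h^0(L) \le \left(\frac{1}{4} + \frac{2+\varepsilon}{4d_n}\right) L^2 + \frac{d_n + 2 - \varepsilon}{2},
\end{equation*}
and the point is that as $n \to \infty$, the coefficient $\frac{1}{4} + \frac{2+\varepsilon}{4d_n} \to \frac{1}{4}$ while the additive error term is controlled by the degree along the base. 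Estimating $h^0$ from below by $\chi$ (using Kodaira-type or Riemann--Roch bounds on $X_n$, together with $h^0(\omega_{X_n}) = p_g(X_n)$ and $\chi(\mathcal{O}_{X_n}) = n^{2q}\chi(\mathcal{O}_X)$) and rearranging should yield $\omega_X^2 \ge (4 - o(1))\,\chi(\mathcal{O}_X)$, whence the clean inequality follows in the limit.

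\textbf{The main obstacle.} The delicate step is the construction of the auxiliary fibration \emph{uniformly} in $n$ so that the error terms are genuinely subleading. In characteristic zero one invokes Bertini to slice $A$ by a general pencil and keep the fibers smooth and irreducible; here, as the paper emphasizes, Bertini fails for base-point-free systems in positive characteristic (\cite{Jo}), so I would need to work with the \emph{possibly singular} general fiber allowed by the hypotheses of Theorem \ref{relnoether}, which only requires the general fiber to be geometrically integral and Gorenstein, not smooth. Verifying that the pulled-back pencil on $\widetilde{X}_n$ has geometrically integral general fibers, controlling how much the blow-ups at base points decrease $L^2$, and checking that $\deg(L|_F) \le 2g_n - 2$ (so that Theorem \ref{relnoether} is applicable rather than the $h^1 = 0$ regime) are the technically demanding points. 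I expect the resolution to lie in choosing the cover and the pencil so that the base locus is exactly the preimage of a fixed effective class, making the correction terms $O(n^{2q-1})$ against a leading term of order $n^{2q}$, so that they vanish after dividing through and taking $n \to \infty$.
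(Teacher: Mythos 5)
Your proposal follows essentially the same route as the paper: pull back along the multiplication-by-$n$ isogeny on $\mathrm{Alb}(X)$, slice the cover by a pencil coming from a \emph{fixed} very ample bundle on the abelian variety, blow up the base locus to get a fibration over $\mathbb{P}^1$ with geometrically integral (possibly singular) fibers, apply Theorem \ref{relnoether} to $\pi_n^*\omega_X$, bound $h^0(\omega_{X_n})$ below by $\chi(\mathcal{O}_{X_n})=n^{2q}\chi(\mathcal{O}_X)$, and let $n\to\infty$ so the error terms (of order $n^{2q-2}$) vanish against the leading $n^{2q}$. The technical points you flag are resolved exactly as you anticipate: the paper's Lemma \ref{easylem} uses the integrality form of Bertini from \cite{Jo} (smoothness is not needed since Theorem \ref{relnoether} allows singular fibers), and the condition $\deg(L|_F)\le 2g-2$ is guaranteed by adjunction from the hypothesis $L\le K_X+B$.
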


Before the proof, we first show a lemma.

\begin{lemma} \label{easylem}
Let $X$ be a smooth surface of general type over $k$. Let $B$ be a nef and big line bundle on $X$ such that
$|B|$ is base point free. Then for any nef line bundle $L \le K_X+B$ satisfying $d=LB \ge 2$, we have
$$
h^0(L) \le (\frac 14 + \frac 1 {2d}) L^2 + \frac {d+2}{2}.
$$
\end{lemma}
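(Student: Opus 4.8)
The plan is to run the filtration argument of Section~2 almost verbatim, with the base-point-free class $B$ playing the role of the fiber class $F$ and a general member $C \in |B|$ playing the role of the general fiber. First I would fix a general $C \in |B|$; since $|B|$ is base-point-free, $C$ may be taken integral, and as a Cartier divisor on the smooth surface $X$ it is automatically Gorenstein, so the Clifford-type bound of Theorem~\ref{h0bound} is available on $C$. Adjunction gives $K_C = (K_X + C)|_C = (K_X + B)|_C$, and the hypothesis $L \le K_X + B$ therefore forces $L|_C \le K_C$ (restrict the effective class $K_X+B-L$ to the general $C$); the same holds after subtracting further effective classes, so every term $L_i|_C$ of the filtration is \emph{special}. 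In particular Theorem~\ref{h0bound}(1) applies uniformly, giving $r_i = h^0(L_i|_C) \le \tfrac12 d_i + 1$ with $d_i = L_i B$, and we never enter the non-special regime.

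Next I would reproduce the reduction process: set $a_i = e_{L_i}$, let $L_{i+1}$ and $Z_{i+1}$ be the movable and fixed parts of $|L_i - a_i B|$, and run the analogue of Lemma~\ref{onestep} to get $Z_{i+1} B > 0$. The one genuinely new feature is that $B^2 > 0$ (as $B$ is nef and big), whereas $F^2 = 0$. This changes the bookkeeping in two places: within a single block the restriction degree along $C$ now drops, since $(L_i - jB)|_C$ has degree $d_i - j\,B^2$, and the intersection identities behind Proposition~\ref{algcase1} and Lemma~\ref{algsumai} acquire corrections proportional to $B^2$. The decisive point is that the degree now drops by
$$
d_i - d_{i+1} = a_i\,B^2 + Z_{i+1} B \ge 2
$$
at \emph{every} step (as $a_i \ge 1$, $B^2 \ge 1$ and $Z_{i+1}B \ge 1$), rather than by possibly $1$ as in the fibered case. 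This is exactly the inequality that, in the proof of Theorem~\ref{relnoether}, was available only in the even hyperelliptic case and was responsible for $\varepsilon = 0$. Hence, after re-deriving the $B^2$-corrected forms of Proposition~\ref{algcase1} and Lemma~\ref{algsumai}, both the hyperelliptic and the non-hyperelliptic cases collapse to the single bound $h^0(L) \le (\tfrac14 + \tfrac1{2d})L^2 + \tfrac{d+2}{2}$, with no $\varepsilon$ surviving.

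The main obstacle is controlling the $B^2$ correction terms, which enter with both signs: subtracting $B$ repeatedly lowers the restriction degrees $d_i - j\,B^2$ inside a block (helping the upper bound on $h^0$), but the identity $L_0^2 - (L_0 - a_0 B)^2 = 2a_0 d_0 - a_0^2 B^2$ and its relatives weaken the lower bound on $L^2$. I expect these to balance, precisely because the same $B^2$ that appears negatively in the $L^2$ estimate reappears positively in the enlarged degree gap displayed above; the task is to organize the summation so that this cancellation is manifest and the final coefficient is exactly $\tfrac14 + \tfrac1{2d}$. A secondary point, specific to $\charr k > 0$, is the choice of $C$: since Bertini's theorem may fail for base-point-free systems, one must justify that a general member of $|B|$ can still be taken reduced and irreducible, so that Theorem~\ref{h0bound} and Lemma~\ref{hyper} apply; the bigness of $B$, making $\phi_{|B|}$ generically finite, is what makes this possible. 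An alternative is to blow up the $B^2$ base points of a general pencil in $|B|$ and invoke Theorem~\ref{relnoether} directly on the resulting fibration of genus $p_a(C)$; this cleans up the intersection theory but destroys the degree-gap advantage (one now has $\widetilde C^2 = 0$), reintroducing $\varepsilon$, so the direct route above is preferable.
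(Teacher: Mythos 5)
Your route is genuinely different from the paper's. The paper's proof is precisely the two-line argument you relegate to your final sentence: by Jouanolou's Bertini theorem choose two general integral members $B_1,B_2\in|B|$ meeting properly, blow up $X$ along $B_1\cap B_2$ to obtain a fibration $\widetilde X\to\mathbb P^1$ with general fiber $\widetilde B$ the strict transform of $B_i$, and apply Theorem \ref{relnoether} to $\sigma^*L$, using $(\sigma^*L)^2=L^2$ and $\sigma^*L\cdot\widetilde B=LB=d$ (the hypotheses hold since $2\le d\le (K_X+B)B=2p_a(\widetilde B)-2$, because $K_X+B-L$ is effective and $B$ is nef). Your reason for rejecting this route --- that the citation may only give $\varepsilon=1$ when $d$ is odd, $\widetilde B$ is hyperelliptic, and $\sigma^*L|_{\widetilde B}\le K_{\widetilde B}=\sigma^*(K_X+B)|_{\widetilde B}$ --- is a fair reading of Theorem \ref{relnoether}: the paper silently takes $\varepsilon=0$, which is harmless in its application to the Severi inequality (there $B$ is twice an ample class, so $d$ is even) but is indeed glossed over for odd $d$.

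However, your replacement argument contains a gap more serious than the one it is designed to avoid. Its entire substance is deferred to ``re-deriving the $B^2$-corrected forms of Proposition \ref{algcase1} and Lemma \ref{algsumai}'' together with the assertion that corrections of opposite sign ``balance''; that is exactly the non-routine part, and you do not carry it out. Concretely, the proof of Lemma \ref{algsumai} rests on $L_0^2-(L_0'+F)^2=2(a_0-1)d_0$, which uses $F^2=0$; with $B$ in place of $F$ this becomes $2(a_0-1)d-(a_0-1)^2B^2$, and nefness of $L-(a_0-1)B$ only gives $(a_0-1)B^2\le d$, so the lower bound $L^2\ge d\bigl(2a_0+\sum_i a_i-2\bigr)$ can degrade by up to a factor of $2$. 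That loss attacks the leading coefficient $\tfrac14$. Even after crediting the compensating gain from the shrinking restricted degrees $d-jB^2$, a crude run of your scheme in the single-block case $n=0$ with $(a_0-1)B^2=d$ and $B^2=1$ yields only $h^0(L)\le \tfrac14 L^2+\tfrac54 d+1$, which overshoots the claimed $\tfrac14L^2+\tfrac1{2d}L^2+\tfrac d2+1$ by roughly $\tfrac d4$. Perhaps a more careful bookkeeping recovers the exact constants, but establishing that \emph{is} the lemma, not a consequence of ``the degree drops by $2$ at every step.'' As written, your proposal is a plan rather than a proof; the paper's blow-up reduction (supplemented, if one wants the stated generality for odd $d$, by a word on why $\varepsilon$ can be discarded) is both shorter and essentially complete.
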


\begin{proof}
By Bertini's theorem in \cite{Jo}, we can choose two general member $B_1, B_2 \in |B|$ such that $B_1, B_2$ are both geometrically integral
and intersect each other properly. Now, let $\sigma: \widetilde X \to X$ be the blow-up of $X$ along $B_1 \cap B_2$ and we get a fibration from $\widetilde X$
to $\mathbb P^1$ with the general fiber $\widetilde B$ the proper transformation of $B_i$. Apply Theorem \ref{relnoether} to this fibration. It follows that
$$
h^0(L)=h^0(\sigma^*L) \le (\frac 14 + \frac 1 {2d}) (\sigma^*L)^2 + \frac {d+2}{2} = (\frac 14 + \frac 1 {2d}) L^2 + \frac {d+2}{2}.
$$
\end{proof}

\begin{proof}[Proof of Theorem \ref{severi}]
Let $X$ be a minimal surface of general type over $k$ with maximal Albanese dimension. Denote
${\rm Alb}_X: X \to A$ to be the Albanese map, where $A={\rm Alb}(X)$ is an abelian variety of dimension
$m$ over $k$.
We only have
$m \le h^{1,0}(X)$ (cf. \cite{Ig}),
but since $X$ is of maximal Albanese dimension, it is always safe for us to use the bound
$$
m \ge 2.
$$

Let $H'$ be a very ample line bundle on $A$, and $L$ be the pull-back of $H=2H'$
on $X$. Set
$$
\alpha = L^2, \quad \beta = \omega_{X/k} L.
$$
Since $X$ is of general type, $\alpha$ and $\beta$ are both strictly positive.

Let $\mu: A \to A$ be the multiplication by $n$, where $n>1$ is an integer. If $\charr k=p>0$, we assume
$n$ and $p$ to be coprime.
We have the following
base change:
$$
\xymatrix{X' \ar[r]^{\tau} \ar[d]_{\nu} & X \ar[d]^{{\rm Alb}_X} \\
A \ar[r]^\mu & A}
$$
where $X'=X \times_\mu A$. We have
$$
\omega^2_{X'/k} = n^{2m} \omega^2_{X/k}, \quad \chi(\mathcal O_{X'})=n^{2m}\chi(\mathcal O_{X}).
$$
We also have the following numerically equivalence on $A$:
$$
\mu^*H \sim_{\rm num} n^2 H,
$$
which yields
$$
\tau^* L \sim_{\rm num} n^2 L'.
$$
Here $L'=\nu^* H$. It follows that
$$
L'^2=n^{2m-4} \alpha, \quad \omega_{X'_n/k}L'=n^{2m-2}\beta.
$$
Now, apply Lemma \ref{easylem} to $X'$. It follows that
$$
\chi(\mathcal O_{X'}) \le h^0(\omega_{X'_n/k}) \le (\frac 14 + \frac 1{2n^{2m-2}\beta}) \, \omega^2_{X'_n/k} + \frac{n^{2m-2}\beta + 2}{2}.
$$
Hence one has
$$
(\frac 14 + \frac 1{2n^{2m-2}\beta}) \, n^{2m} \omega^2_{X/k} + \frac{n^{2m-2}\beta + 2}{2} \ge n^{2m}\chi(\mathcal O_{X}).
$$
Therefore, the results follows by letting $n \to \infty$.
\end{proof}

\subsection{Surfaces with Albanese pencils}

Here we treat the following more general case.
\begin{theorem}
Suppose $\chi(\mathcal O_X) > 0$ and the Albanese map of $X$ is non-constant. Then the following are true:
\begin{itemize}
\item[(1)] $\omega^2_{X} \ge 2\chi(\mathcal O_X)$;
\item[(2)] If $2\chi(\mathcal O_X) \le \omega^2_{X} < 8 \chi(\mathcal O_X)/3$, then the Albanese map of $X$ induces a pencil
of genus $2$ curves.
\item[(3)] If $8 \chi(\mathcal O_X)/3 \le \omega^2_{X} < 3 \chi(\mathcal O_X)$, then the Albanese map of $X$ induces a pencil
of genus $2$ curves or hyperelliptic curves of genus $3$.
\end{itemize}
\end{theorem}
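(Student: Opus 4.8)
The plan is to split according to the dimension of the image of the Albanese map $\mathrm{Alb}_X\colon X\to A$. Since $X$ is of general type and the Albanese map is non-constant, the image is either a surface or a curve. If the image is a surface, then $X$ has maximal Albanese dimension, so Theorem \ref{severi} applies and gives $\omega_X^2\ge 4\chi(\mathcal O_X)$; as $\chi(\mathcal O_X)>0$ this is stronger than the bound in (1), and since $4\chi(\mathcal O_X)$ lies outside the ranges of (2) and (3), this case simply does not contribute there. So from now on I assume the image is a curve. Taking the Stein factorization of $X\to \mathrm{Alb}_X(X)$ produces the Albanese pencil $f\colon X\to B$, a fibration onto a smooth curve $B$ with connected fibers; write $b=g(B)$ and let $g$ be the genus of a general fiber. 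Because $B$ maps non-constantly to an abelian variety we have $b\ge 1$, and because $X$ is of general type the fiber genus satisfies $g\ge 2$. Finally, as $X$ is minimal of general type it has no $(-1)$-curve, so $f$ is relatively minimal and $\omega_f$ is nef.

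Next I would extract a clean inequality between $\omega_X^2$ and $\chi(\mathcal O_X)$. Combining the relative Noether inequality (Theorem \ref{relnoetheromega}), $h^0(\omega_f)\le \tfrac{g}{4g-4}\omega_f^2+g$, with Riemann--Roch for the rank-$g$ bundle $f_*\omega_f$ on $B$, namely $h^0(\omega_f)=h^0(f_*\omega_f)\ge \deg(f_*\omega_f)+g(1-b)$, together with the standard relations
\begin{align*}
\deg(f_*\omega_f)&=\chi(\mathcal O_X)-(b-1)(g-1),\\
\omega_f^2&=\omega_X^2-8(b-1)(g-1),
\end{align*}
I obtain after simplification
$$
\frac{g}{4g-4}\,\omega_X^2\ \ge\ \chi(\mathcal O_X)+(b-1)-g.
$$
To remove the constant $-g$ I would run the base change trick exactly as in the proof of Theorem \ref{slopeg1}: pull $f$ back along a connected étale cover $B'\to B$ of degree $N$ (multiplication by $n$ if $b=1$, an unramified cover coming from $\pi_1^{\mathrm{et}}(B)$ if $b\ge 2$, which exists for arbitrarily large $N$ in any characteristic). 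All the relevant invariants $\omega_{X'}^2$, $\chi(\mathcal O_{X'})$ and $(g(B')-1)$ scale by $N$ while the fiber genus is unchanged, so the displayed inequality for $f'$ becomes $\tfrac{g}{4g-4}\omega_X^2\ge \chi(\mathcal O_X)+(b-1)-g/N$. Letting $N\to\infty$ gives
$$
\omega_X^2\ \ge\ \frac{4g-4}{g}\bigl(\chi(\mathcal O_X)+(b-1)\bigr)\ \ge\ \frac{4g-4}{g}\,\chi(\mathcal O_X).
$$
Since $\tfrac{4g-4}{g}\ge 2$ for all $g\ge 2$, this already yields statement (1).

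Parts (2) and (3) are then bookkeeping in $g$, using that $g\mapsto \tfrac{4g-4}{g}$ is strictly increasing. For (2) the generically finite case is excluded because it gives $\ge 4\chi(\mathcal O_X)$, and if the pencil had $g\ge 3$ then $\omega_X^2\ge \tfrac 83\chi(\mathcal O_X)$, contradicting $\omega_X^2<\tfrac 83\chi(\mathcal O_X)$; hence $g=2$ and $f$ is a genus-$2$ pencil. For (3) the same reasoning excludes the generically finite case and forces $g\le 3$ (since $g\ge 4$ gives $\omega_X^2\ge 3\chi(\mathcal O_X)$), so the pencil has genus $2$ or $3$; the genus-$2$ case is the desired conclusion, and it remains only to show that a genus-$3$ pencil occurring in the range $\tfrac 83\chi(\mathcal O_X)\le \omega_X^2<3\chi(\mathcal O_X)$ must be hyperelliptic.

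The hard part is precisely this last exclusion. I expect it to require a sharper input than Theorem \ref{relnoetheromega}, which for $\omega_f$ with $d=2g-2=4$ cannot distinguish hyperelliptic from non-hyperelliptic fibers (the degree is even, so $\varepsilon=0$ either way). What is needed is the improved slope bound $\omega_f^2\ge 3\deg(f_*\omega_f)$ for a non-hyperelliptic genus-$3$ fibration, i.e.\ the relative Noether coefficient $\tfrac{g}{4g-4}=\tfrac 38$ sharpened to $\tfrac13$; feeding this through the base change argument above yields $\omega_X^2\ge 3\chi(\mathcal O_X)$ and rules out a non-hyperelliptic fiber in the stated range. To obtain it I would rerun the reduction process of Section~2 for $L=\omega_f$ while exploiting the strict Clifford inequality $r_i\le \tfrac12 d_i+\tfrac12$ available at every step where $L_i|_F$ is neither trivial nor the canonical bundle (valid since $F$ is non-hyperelliptic), which tightens the estimates of Proposition \ref{algcase1} and Lemma \ref{algsumai}; equivalently one may invoke the classical description of a non-hyperelliptic genus-$3$ fibration as a relative plane quartic in the $\mathbb P^2$-bundle $\mathbb P(f_*\omega_f)$. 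Controlling the boundary contributions of the filtration, in particular the canonical term with $r_0=3$, $d_0=4$, which on its own only produces the ratio $\tfrac38$, so that the total still yields the coefficient $\tfrac13$, is the delicate point, and is where the characteristic-free nature of the filtration has to be checked with care.
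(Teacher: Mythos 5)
Your treatment of (1) and (2) is correct and is essentially the paper's argument: the paper likewise takes the Stein factorization of the Albanese pencil, applies the relative Noether inequality to $\omega_{X'}$ on the pullback $X'=X\times_\mu A$ along multiplication by $n$ on $A=\mathrm{Jac}(Y)$, and lets $n\to\infty$ to get $\omega_X^2\ge \frac{4g-4}{g}\chi(\mathcal O_X)$; your version with relative invariants and a general \'etale cover is the same computation in different clothing. The generically finite case is also handled the same way, via Theorem \ref{severi}.

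The problem is (3), where your argument has a genuine gap. You correctly identify that everything reduces to showing that a \emph{non-hyperelliptic} genus-$3$ Albanese pencil forces $\omega_X^2\ge 3\chi(\mathcal O_X)$, but you do not prove this: you propose to sharpen the slope coefficient from $\tfrac{3}{8}$ to $\tfrac{1}{3}$ by rerunning the filtration with the strict Clifford bound $r_i\le\tfrac12 d_i+\tfrac12$, and you yourself flag the boundary term $r_0=3$, $d_0=4$ as the unresolved ``delicate point.'' That is precisely where the plan fails as stated: for $L=\omega_f$ the degree $d=2g-2=4$ is even, so the $\varepsilon$-refinement in Theorem \ref{relnoether} gives nothing, and the non-hyperelliptic case of the paper's proof still only yields $(\tfrac14+\tfrac1{2d_0})L_0^2+\tfrac{d_0}{2}+1=\tfrac38 L^2+3$ --- the same constant as the hyperelliptic case. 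A coefficient of $\tfrac13$ (Konno-type slope bound for non-hyperelliptic genus-$3$ fibrations) is a genuinely stronger statement than anything established in Section~2, and in arbitrary characteristic it is not available off the shelf. The paper sidesteps this entirely by a global argument on $X'$: since $h^0(\omega_{X'})$ grows like $n^{2b}$ while $h^0(\nu^*H)$ grows like $n^{2b-2}$, for $n$ large $\omega_{X'}-\nu^*H$ is effective, so $\nu$ factors through the canonical map of $X'$ and is the unique hyperelliptic pencil on $X'$; then, if $F$ is non-hyperelliptic, either the canonical map is composed with the pencil $\nu$ (giving $\omega_{X'}^2\ge 4h^0(\omega_{X'})-4$) or it is generically finite and the Castelnuovo inequality of \cite{Li1} gives $\omega_{X'}^2\ge 3h^0(\omega_{X'})-7\ge 3\chi(\mathcal O_{X'})-10$, contradicting $\omega_{X'}^2<3\chi(\mathcal O_{X'})-10$ for $n$ large. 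You would need either to import that argument or to actually carry out the sharpened slope inequality you sketch; as written, (3) is not proved.
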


\begin{proof}
We can assume  $\omega^2_{X} < 3 \chi(\mathcal O_X)$. Then from Theorem \ref{severi}, we know that the Albanese map of $X$ induces a pencil.
Passing through the Stein factorization,
we denote this pencil by $f: X \to Y$, where $Y$ is a smooth curve satisfying $b=g(Y)>0$, the general fiber $F$ of $f$ is geometrically integral of arithmetic genus $g=p_a(F) \ge 2$.

Let $A={\rm Jac}(Y)$ and $\mu: A \to A$ be the multiplication by a positive integer $n$ coprime to $\charr k$. Let $X'=X \times_{\mu} A$. We have the following diagram:
$$
\xymatrix{X' \ar[r]^{\tau} \ar[d]_{\nu} & X \ar[d]^{f} \\
Y' \ar@{->}[d] \ar[r]^{\mu} & Y \ar[d] \\
A \ar[r]^{\mu} & A}
$$

Apply Theorem \ref{relnoether} to $\omega_{X'}$. One can get
$$
\chi(\mathcal O_{X'}) \le h^0(\omega_{X'}) \le \frac{g}{4g-4} \omega^2_{X'} + g.
$$
It yields
$$
n^{2b} \chi(\mathcal O_X) \le \frac{n^{2b}g}{4g-4} \omega^2_{X}+g.
$$
By letting $n \to \infty$, it follows that
$$
\omega^2_{X} \ge \frac{4g-4}{g} \chi(\mathcal O_X).
$$
Therefore, from our assumptions, (1) and (2) are proved and $g \le 3$. We devote the rest part to proving (3).

Fix a very ample line bundle $H$ on $A$ and write $L'=\nu^*H$. Then
$$
h^0(L') = h^0(H|_{Y'}) \le \deg(H|_{Y'}) \le n^{2b-2} \deg(H|_Y).
$$
While we have seen that $h^0(\omega_X) \ge n^{2b} \chi(\mathcal O_X)$. Then we can assume that
$$
\omega_{X'}-L' \ge 0
$$
for $n$ sufficiently large.
It implies that $\nu$ factors through the canonical map
$\phi_{\omega_{X'}}$ of $X'$. We can further assume that
$$
\omega^2_{X'} < 3\chi(\mathcal O_{X'}) - 10.
$$

We claim that  $\nu$ is the only possible hyperelliptic pencil on $X'$.
Otherwise, $X'$ has a hyperelliptic pencil which is not $\nu$. Denote the general member in this pencil by $G$. Then $\nu(G)=Y'$.
On the other hand, we know $\phi_{\omega_{X'}}(G)=\mathbb P^1$. Hence $g(Y')=0$, which is impossible. Therefore, our claim holds.

Now suppose that $F$ is not hyperelliptic.
If $\phi_{\omega_{X'}}$ is not generically finite, then we can write
$$
\omega_{X'} \sim_{\rm alg} aF'+Z,
$$
where $F'$ is a general fiber of $\nu$, $Z$ is the fixed part of $|\omega_{X'}|$, $a \ge h^0(\omega_{X'})$. Hence
$$
\omega^2_{X'} \ge a \omega_{X'} F' \ge 4 h^0(\omega_{X'}) \ge 4 \chi(\mathcal O_{X'}) - 4.
$$
If $\phi_{\omega_X}$ is not generically finite, by the Castelnuovo inequality in \cite{Li1},
$$
\omega^2_{X'} \ge 3h^0(\omega_{X'}) - 7 \ge  3 \chi(\mathcal O_{X'})-10.
$$
Neither case is possible. So we finish this proof.
\end{proof}

\end{document}